\def\ifdraft{\ifdim\overfullrule>\z@
        \expandafter\@firstoftwo
    \else
        \expandafter\@secondoftwo
    \fi}
\def\clap#1{\hbox to 0pt{\hss#1\hss}}
\tikzstyle{polyCol} = [fill, cap=round, join=round, line width=30pt]
\definecolor{myred}{rgb}{1,0.2,0.3}
\tikzset{ PO/.style = {dashed,very near start, commutative diagrams/phantom, "\lrcorner", /tikz/commutative diagrams/.cd,dr}} 
\tikzset{ PB/.style = {dashed,very near end, commutative diagrams/phantom, "\ulcorner", /tikz/commutative diagrams/.cd,dr}} 
\tikzset{ labl/.style={anchor=north, rotate=90, inner sep=1mm} }
\tikzset{
  redCirc/.style={
    label={[inner sep=0,minimum size=10mm, circle, fill=myred, opacity=0.2]center:{}}
    }
}
\newtheorem{theorem}{Theorem}[section]
\newtheorem{proposition}[theorem]{Proposition}
\newtheorem{lemma}[theorem]{Lemma}
\newtheorem{corollary}[theorem]{Corollary}
\newtheorem*{theorem*}{Theorem}
\theoremstyle{definition}
\newtheorem*{notation}{Notation}
\newtheorem{definition}[theorem]{Definition}
\newtheorem{setup}[theorem]{Setup}
\definecolor{asparagus}{rgb}{0.53, 0.66, 0.42}
\let\mod\relax
\let\Im\relax
\DeclareMathOperator{\Cok}{Cok}
\DeclareMathOperator{\cok}{cok}
\DeclareMathOperator{\End}{End}
\DeclareMathOperator{\Ext}{Ext}
\DeclareMathOperator{\Gen}{Gen}
\DeclareMathOperator{\Hom}{Hom}
\DeclareMathOperator{\id}{id}
\DeclareMathOperator{\Im}{Im}
\DeclareMathOperator{\mod}{mod}
\DeclareMathOperator{\pd}{pd}
\DeclareMathOperator{\Sub}{Sub}
\DeclareMathOperator\D     {\mathsf{D}}      % Derived category
\DeclareMathAlphabet{\pazocal}{OMS}{zplm}{m}{n}
\newcommand{\cA}{\mathcal{A}}
\newcommand{\cD}{\mathcal{D}}
\newcommand{\cS}{\mathcal{S}}
\newcommand{\cB}{\mathcal{B}}
\newcommand{\cC}{\mathcal{C}}
\newcommand{\cE}{\mathcal{E}}
\newcommand{\cF}{\mathcal{F}}
\newcommand{\cT}{\mathcal{T}}
\newcommand{\pA}{\pazocal{A}}
\newcommand{\pC}{\pazocal{C}}
\newcommand{\pD}{\pazocal{D}}
\newcommand{\pE}{\pazocal{E}}
\newcommand{\pF}{\pazocal{F}}
\newcommand{\pP}{\pazocal{P}}
\newcommand{\pS}{\pazocal{S}}
\newcommand{\pT}{\pazocal{T}}
\newcommand{\pX}{\pazocal{X}}
\newcommand{\pY}{\pazocal{Y}}
\newcommand{\pZ}{\pazocal{Z}}
\newcommand{\NN}{\mathbb{N}}
\newcommand{\ZZ}{\mathbb{Z}}
\newcommand{\xto}{\xrightarrow}
\newcommand{\mono}{\rightarrowtail}
\newcommand{\epi}{\twoheadrightarrow}
\newcommand\xmono[2][]{\ensurestackMath{\mathrel{%
  \stackengine{1pt}{%
    \stackengine{0pt}{\rightarrowtail}{\scriptstyle#2}{O}{c}{F}{F}{S}%
  }{\scriptstyle#1}{U}{c}{F}{F}{S}%
}}}
\newcommand\xepi[2][]{\ensurestackMath{\mathrel{%
  \stackengine{1pt}{%
    \stackengine{0pt}{\twoheadrightarrow}{\scriptstyle#2}{O}{c}{F}{F}{S}%
  }{\scriptstyle#1}{U}{c}{F}{F}{S}%
}}}
\DeclarePairedDelimiterX\filt[1]{\langle}{\rangle}{ #1 }
\DeclarePairedDelimiterX\set[1]{\lbrace}{\rbrace}{ #1 }
\DeclarePairedDelimiterX\Set[1]{\lbrace}{\rbrace}{ #1 }
\newlength\tindent
\renewcommand{\indent}{\hspace*{\tindent}}
\newcommand{\dFourTorsScope}[1]{
    \begin{scope}[shift={#1}]
        \dFourTors
}
\newcommand{\dFourTors}[9]{
    \draw [gray, fill=white, fill=#1] (0,3) circle (14pt);
    \draw [gray, fill=white, fill=#2] (1,2) circle (14pt);
    \draw [gray, fill=white, fill=#3] (2,1) circle (14pt);
    \draw [gray, fill=white, fill=#4] (2,3) circle (14pt);
    \draw [gray, fill=white, fill=#5] (3,2) circle (14pt);
    \draw [gray, fill=white, fill=#6] (3,4) circle (14pt);
    \draw [gray, fill=white, fill=#7] (4,3) circle (14pt);
    \draw [gray, fill=white, fill=#8] (5,2) circle (14pt);
    \draw [gray, fill=white, fill=#9] (5,4) circle (14pt);
    \end{scope}
}
\subjclass[2020]{18E10,18G80}
\keywords{Proper abelian subcategories, Torsion theory, Filtrations}
\title{Filtrations of Torsion Classes in Proper Abelian Subcategories} 
\author{Anders S. Kortegaard}
\address{Department of Mathematics, Aarhus University, Ny Munkegade 118, 8000 Aarhus C, Denmark}
\email{kortegaard@math.au.dk}
\begin{document}

\begin{abstract}
    In an abelian category $\cA$, we can generate torsion pairs from tilting objects of projective dimension $\leq 1$. However, when we look at tilting objects of projective dimension $2$, there is no longer a natural choice of an associated torsion pair.
    Instead of trying to generate a torsion pair, Jensen, Madsen and Su generated a triple of extension closed classes that can filter any objects of $\cA$.
    We generalize this result to proper abelian subcategories.
\end{abstract}

\maketitle

\section{Introduction}
\renewcommand{\thetheorem}{\Alph{theorem}}
\setcounter{theorem}{0}

Let $k$ be a field. 
Given a finite dimensional $k$-algebra $\Lambda$ let $\cA = \mod \Lambda$. 
Given a tilting object $T\in\cA$ of projective dimension $\pd(T)\leq 1$ (see \cite[chap. I.4]{happel1996tilting} for a definition), 
we can construct a torsion pair $(\pT,\pF)$ where $\pT=\Gen(T)$ and $\pF = \pT^\perp$.
For each $x\in\cA$ there exists, by definition, a short exact sequence $t\mono x\epi f$ with $t\in\pT$ and $f\in\pF$.
Another way to describe this is by saying that there exists a filtration $0\subseteq t\subseteq x$ of $x$, where the quotient of the first inclusion $\Cok(0\mono t)\in\cT$ and the quotient of the second inclusion $\Cok(t\mono x)\in\cF$. 
The reason to formulate it in this way will become clear a bit later.

\indent If $T$ is a tilting object of projective dimension $\pd(T) = 2$ there is no longer a natural choice for a torsion pair in $\cA$ associated to $T$, but there will be an associated t-structure in the derived category $\D^b(\cA)$ whose heart is equivalent to $\cB\coloneqq\mod(\End(T)^{op})$ (under the right assumptions, see \cite[sec.~ III.4]{beligiannis2007homological}). Furthermore, $\cB$ will be derived equivalent to $\cA$.
In \cite{jensen2013filtrations} Jensen, Madsen and Su use this derived equivalence to construct three extension closed classes $\pE_0,\pE_1,\pE_2\subseteq\cA$
with $\Hom(\pE_i,\pE_j)=0$ for $i<j$, such that each object in $\cA$ can be filtered with quotients in $\pE_i$.

\begin{theorem*}[{\cite[thm. 2]{jensen2013filtrations}}]
    Given $x\in\cA$ there exists a unique filtration  $0=x_0\subseteq x_1\subseteq x_2\subseteq x_3=x$ such that
    $\Cok(x_{i}\mono x_{i+1})\in \pE_i$, for $i=0,1,2$.
\end{theorem*}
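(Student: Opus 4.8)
My plan is to argue inside $\D^b(\cA)$, where $\cA$ and $\cB$ both appear as hearts of bounded $t$-structures --- the latter via the tilting derived equivalence $F\colon\D^b(\cA)\simto\D^b(\cB)$, whose heart I identify with $\cB$. I will use three facts from the construction: $\pE_i=\cA\cap\cB[-i]$ (equivalently, $y\in\pE_i$ exactly when $F(y)$ is concentrated in cohomological degree $i$); $F(\cA)\subseteq\D^{[0,2]}(\cB)$, because $\pd(T)\le2$; and, dually --- since $T$ is also a tilting $\End(T)$-module of projective dimension $\le2$ --- $F^{-1}(\cB)\subseteq\D^{[-2,0]}(\cA)$. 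I would get existence by transporting the $\cB$-truncation tower of $x$ back to $\cA$, and uniqueness from the vanishing $\Hom(\pE_i,\pE_j)=0$ for $i<j$ alone.

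Uniqueness should be short. Given two filtrations $0=x_0\subseteq x_1\subseteq x_2\subseteq x_3=x$ and $0=x_0'\subseteq\cdots\subseteq x_3'=x$ with cokernels $e_i,e_i'\in\pE_i$, note $x_1=e_0\in\pE_0$ while $x/x_1'$ is an extension of $e_2'\in\pE_2$ by $e_1'\in\pE_1$; left-exactness of $\Hom(x_1,-)$ together with $\Hom(\pE_0,\pE_1)=\Hom(\pE_0,\pE_2)=0$ then forces the composite $x_1\hookrightarrow x\twoheadrightarrow x/x_1'$ to vanish, so $x_1\subseteq x_1'$, hence (by symmetry) $x_1=x_1'$. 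Passing to $x/x_1$ and repeating the same reasoning with $\Hom(\pE_1,\pE_2)=0$ gives $x_2=x_2'$, and $x_3=x=x_3'$ is automatic. I anticipate no obstacle here.

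For existence, set $C\coloneqq F(x)\in\D^{[0,2]}(\cB)$ and $N_i\coloneqq\H^i(C)=\Ext^i_\Lambda(T,x)$, and transport the standard truncation triangles
\[
N_0=\tau^{\le0}C\to\tau^{\le1}C\to N_1[-1]\to N_0[1],\qquad \tau^{\le1}C\to C\to N_2[-2]\to(\tau^{\le1}C)[1]
\]
through $F^{-1}$, defining $x_1\coloneqq F^{-1}(N_0)$, $x_2\coloneqq F^{-1}(\tau^{\le1}C)$, $x_3\coloneqq x$ and $x_0\coloneqq 0$. This yields morphisms $x_0\to x_1\to x_2\to x_3$ in $\D^b(\cA)$ whose successive mapping cones are $F^{-1}(N_i[-i])=F^{-1}\!\bigl(\Ext^i_\Lambda(T,x)[-i]\bigr)$, each a priori in $\cB[-i]$. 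The step I would then need is that each of these cones in fact lies in $\cA$. Granting that, each of the three triangles has all three vertices in the heart $\cA$ and hence is a short exact sequence in $\cA$; reading them in order, $x_1\in\pE_0$ is a subobject of $x_2$ with $x_2/x_1\cong F^{-1}(N_1[-1])\in\cA\cap\cB[-1]=\pE_1$, and $x_2$ is a subobject of $x$ with $x/x_2\cong F^{-1}(N_2[-2])\in\pE_2$ --- precisely the filtration asked for.

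So the crux --- and the step I expect to be hardest --- is the claim $F^{-1}\!\bigl(\Ext^i_\Lambda(T,x)[-i]\bigr)\in\cA$ for $i=0,1,2$. For $i=0$ the amplitude bounds suffice: $F^{-1}(\cB)\subseteq\D^{[-2,0]}(\cA)$ gives $x_1\in\D^{[-2,0]}(\cA)$ a priori, and applying $\H^\bullet_\cA$ to $x_1\to x\to F^{-1}(\tau^{\ge1}C)\to x_1[1]$, while observing that $F^{-1}(\tau^{\ge1}C)$ --- an extension of an object of $\cB[-2]\subseteq\D^{[0,2]}(\cA)$ by one of $\cB[-1]\subseteq\D^{[-1,1]}(\cA)$ --- lies in $\D^{\ge-1}(\cA)$, forces $\H^{-1}_\cA(x_1)=\H^{-2}_\cA(x_1)=0$, so $x_1\in\cA$. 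For $i=1,2$ the claim is equivalent to the vanishing $\Tor^{\End(T)}_j\!\bigl(T,\Ext^i_\Lambda(T,x)\bigr)=0$ whenever $j\ne i$; this is the projective-dimension-two analogue of the Brenner--Butler orthogonality between the $\Ext$- and $\Tor$-functors of a tilting pair, and I would deduce it from the tilting axioms for $T$ over $\Lambda$ and over $\End(T)$ --- either by identifying the essential image of $\Ext^i_\Lambda(T,-)$ with the full subcategory of $\cB$ killed by $\Tor^{\End(T)}_j(T,-)$ for all $j\ne i$, or by inducting on the cohomological amplitude of $F(x)$ and comparing, degree by degree, the $\cA$- and $\cB$-truncation towers of $x$. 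Everything else in the proof is formal.
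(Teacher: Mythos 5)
Your uniqueness argument is fine (it only uses $\Hom(\pE_i,\pE_j)=0$ for $i<j$, which does hold), but the existence half rests on a misidentification of the classes and, consequently, on a false lemma. You take $\pE_i=\cA\cap\Sigma^{-i}\cB$ for all $i$, whereas only $\pE_1$ has this form: in the statement (and in Jensen--Madsen--Su) one has $\pE_0=\filt{\Gen_\cA(\cA\cap\cB)}$ and $\pE_2=\filt{\Sub_\cA(\cA\cap\Sigma^{-2}\cB)}$, which are in general strictly larger than $\pF^0=\cA\cap\cB$ and $\pF^2=\cA\cap\Sigma^{-2}\cB$; this enlargement is exactly what makes the theorem true. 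Your crux claim --- that $F^{-1}\bigl(\Ext^i(T,x)\bigr)$ placed in degree $i$ lies in $\cA$, i.e.\ $\Tor_j^{\End(T)}\bigl(T,\Ext^i(T,x)\bigr)=0$ for $j\neq i$, for \emph{every} $x\in\cA$ --- is false. The projective-dimension-two analogue of Brenner--Butler (Miyashita) only says that $\Ext^i(T,-)$ restricted to $\pF^i$ lands in the corresponding $\Tor$-orthogonal class and gives an equivalence there; it says nothing about $\Ext^i(T,x)$ for arbitrary $x$. Transporting the truncation tower of $F(x)$ works precisely when $x\in\pF^0\ast\pF^1\ast\pF^2$, which is in general a proper subcategory of $\cA$ --- and the failure of this equality is the very reason the closures $\filt{\Gen(-)}$ and $\filt{\Sub(-)}$ appear. (Your amplitude argument for $i=0$ is correct as far as it goes, but it neither makes $F^{-1}(H^0F(x))\to x$ a monomorphism nor puts the later cones in $\cA$.)

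Concretely, let $\Lambda=kQ/(\beta\alpha)$ with $Q\colon 1\xrightarrow{\alpha}2\xrightarrow{\beta}3$, and $T=S_1\oplus P_1\oplus P_2$; this is a tilting module of projective dimension $2$ (use $0\to P_3\to P_2\to P_1\to S_1\to 0$). Here $S_1,P_1,P_2\in\pF^0$, $S_3\in\pF^2$, and no indecomposable lies in $\pF^1$, so $\pF^1=0$; yet $x=S_2$ has both $\Hom(T,S_2)\neq 0$ and $\Ext^1(T,S_2)\supseteq\Ext^1(S_1,S_2)\neq 0$. Hence $F^{-1}$ of $\Ext^1(T,S_2)$ placed in degree $1$ cannot lie in $\cA$ (it would be a nonzero object of $\pF^1$). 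Indeed $F^{-1}(H^0F(S_2))=P_2$ and the induced map $P_2\to S_2$ is the canonical epimorphism, not a monomorphism, with cone $\Sigma S_3\notin\cA$: your tower produces no filtration of $S_2$ at all. The correct filtration of $S_2$ is the trivial one $0\subseteq S_2$, because $S_2\in\Gen(\pF^0)\subseteq\pE_0$ --- visible only after enlarging $\pF^0$ to $\pE_0$. This is why the paper argues differently: it shows $\pE_0$ is a torsion class and $\pE_2$ a torsion-free class, uses noetherianness of $\cA$ (Polishchuk) to obtain the torsion pairs $(\pE_0,\pE_0^{\perp})$ and $({}^{\perp}\pE_2,\pE_2)$ with ${}^{\perp}\pE_2\supseteq\pE_0$, nests the two canonical short exact sequences, and checks that the middle quotient lands in $\cA\cap\Sigma^{-1}\cB=\pE_1$; no claim that the cohomologies of $F(x)$ pull back into $\cA$ is ever needed.
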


In this article we will give a different construction of the classes $\pE_i$ and generalize this theorem to the setting of proper abelian subcategories.
The concept of proper abelian subcategories is a generalization of hearts of t-structures, introduced by Jørgensen in \cite{jorgensen2022abelian}.
A proper abelian subcategory $\cA$ is an abelian category that sits inside a triangulated category $\cT$ in such a way that short exact sequences in $\cA$ correspond exactly to short triangles in $\cT$ whose objects are in $\cA$, see \Cref{def:pasc}.

\medbreak
Instead of using the derived equivalence to construct the classes $\pE_i$, we will construct them by using proper abelian subcategories $\cA$ and $\cB$ that satisfy the property that $\cB\subseteq\Sigma^2\cA\ast\Sigma\cA\ast\cA$ and
$\cA\subseteq\cB\ast\Sigma^{-1}\cB\ast\Sigma^{-2}\cB$.
With this we can show the following statement.

\begin{theorem}[=\Cref{res:torsion_filtation}]
    \label{intro:main_res}
    Let $\cT$ be a triangulated category, and let $\cA,\cB$ be proper abelian subcategories, where $\cA$ is a noetherian abelian category,
    satisfying that $\cT(\cA, \Sigma^{-i}\cA)=\cT(\cB, \Sigma^{-i}\cB)=0$ for $1\leq i\leq 5$.
    Assume that $\cB\subseteq\Sigma^2\cA\ast\Sigma\cA\ast\cA$ and $\cA\subseteq\cB\ast\Sigma^{-1}\cB\ast\Sigma^{-2}\cB$.
    Then we can define extension closed classes $\pE_0,\pE_1,\pE_2\subseteq\cA$, with $\Hom(\pE_i,\pE_j) = 0$ for $i<j$, such that given $x\in\cA$, there is a filtration of subobjects 
    $0 = x_0 \subseteq x_1 \subseteq x_2 \subseteq x_3 = x$ such that each quotient $x_{i+1}/x_i = \Cok(x_{i}\mono x_{i+1}) \in \pE_i$.
\end{theorem}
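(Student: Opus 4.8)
The plan is to carry the Jensen--Madsen--Su argument over to this setting, with the two $\ast$-containments taking the place of the derived equivalence and the vanishing $\cT(\cA,\Sigma^{-i}\cA)=\cT(\cB,\Sigma^{-i}\cB)=0$ for $1\le i\le 5$ supplying the room for the bookkeeping. I would set $\pE_i\coloneqq\cA\cap\Sigma^{-i}\cB$ for $i=0,1,2$, equivalently $\pE_i=\{x\in\cA:\Sigma^{i}x\in\cB\}$. Orthogonality is then immediate: for $i<j$, $x\in\pE_i$, $y\in\pE_j$ we get $\Hom(x,y)=\cT(x,y)\subseteq\cT(\Sigma^{-i}\cB,\Sigma^{-j}\cB)=\cT(\cB,\Sigma^{-(j-i)}\cB)=0$ since $1\le j-i\le 2$; this orthogonality also forces any filtration of the shape claimed to be unique, which in turn will yield extension closedness.

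Next I would produce, functorially in $x\in\cA$, a tower in $\cT$ realising $\cA\subseteq\cB\ast\Sigma^{-1}\cB\ast\Sigma^{-2}\cB$. Since $\cT(\cB,\Sigma^{-1}\cB)=\cT(\cB,\Sigma^{-2}\cB)=0$, the classes $\cB,\Sigma^{-1}\cB,\Sigma^{-2}\cB$ are $\Hom$-orthogonal in this order, so each $x$ sits in an essentially unique diagram $0=F_0\to F_1\to F_2\to F_3=x$ whose successive cones are $F_1=b_0\in\cB$, $\Sigma^{-1}b_1$ with $b_1\in\cB$, and $\Sigma^{-2}b_2$ with $b_2\in\cB$; I would check that this assignment is functorial in $x$, which is where the room in the vanishing range beyond the obvious $i\le 2$ is spent (to lift morphisms to the towers, and to iterate the construction). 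Symmetrically, $\cB\subseteq\Sigma^2\cA\ast\Sigma\cA\ast\cA$ writes every object of $\cB$, and each $\Sigma$-translate of it, as an iterated extension of objects of $\cA$, $\Sigma^{-1}\cA$ and $\Sigma^{-2}\cA$ (suitably translated).

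The crux is to show that $F_1,F_2$ and all three successive cones already lie in $\cA$; given this, the proper-abelian property of $\cA$ upgrades the tower to a filtration by subobjects $0=x_0\subseteq x_1\subseteq x_2\subseteq x_3=x$ with $x_1=b_0\in\cA\cap\cB=\pE_0$, $x_2/x_1=\Sigma^{-1}b_1\in\pE_1$ (as $\Sigma(x_2/x_1)=b_1\in\cB$), and $x/x_2=\Sigma^{-2}b_2\in\pE_2$, which is the assertion. As a model of the mechanism, to see $b_0=F_1\in\cA$: write $u\to b_0\to a\to\Sigma u$ with $a\in\cA$ and $u\in\Sigma^2\cA\ast\Sigma\cA$; the composite $u\to b_0\to x$ vanishes because $\Hom(u,x)=0$ ($u$ is built from $\Sigma^2\cA$ and $\Sigma\cA$, while $x\in\cA$), so $u\to b_0$ factors through $\Sigma^{-1}\cone(b_0\to x)$; now $\cone(b_0\to x)$ lies in $\Sigma^{-1}b_1\ast\Sigma^{-2}b_2$, which by $\cB\subseteq\Sigma^2\cA\ast\Sigma\cA\ast\cA$ is an iterated extension of $\Sigma^{j}\cA$ with $j\in\{1,0,-1,-2\}$, so $\Sigma^{-1}\cone(b_0\to x)$ is an iterated extension of $\Sigma^{j'}\cA$ with $j'\in\{0,-1,-2,-3\}$, and since $u$ only involves $\Sigma^{j}\cA$ with $j\in\{1,2\}$ all the relevant groups $\cT(\Sigma^{j}\cA,\Sigma^{j'}\cA)=\cT(\cA,\Sigma^{j'-j}\cA)$ have $1\le j-j'\le 5$ and vanish; hence $u\to b_0$ is zero, so $a\cong b_0\oplus\Sigma u$, so $\Sigma u\in\cA$, and one more range estimate of the same kind forces $u=0$, whence $b_0\cong a\in\cA$. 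The longer but analogous analysis of $F_2$ and of the remaining cones runs on the same ``separation at most $5$'' principle, uses that the kernel, cokernel and cone of a morphism in $\cA$ are again iterated extensions of objects of $\cA$ and $\Sigma\cA$, and is where I expect the noetherian hypothesis to enter.

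Finally, extension closedness of $\pE_i$ follows by applying the functorial tower to a short exact sequence $x'\mono x\epi x''$ with $x',x''\in\pE_i$ and comparing with the one-step towers of the outer terms: this forces the tower of $x$ to be concentrated in the $i$-th cone, i.e.\ $\Sigma^{i}x\in\cB$. The decisive obstacle throughout is the crux step --- passing from a decomposition that a priori lives only in $\cT$ to an honest subobject filtration inside $\cA$, i.e.\ collapsing the various $\Sigma$-translates of $\cA$ back to $\cA$ itself --- and organising this so that it follows uniformly from the width-$5$ vanishing together with the proper-abelian and noetherian hypotheses, rather than by an ad hoc case analysis, is where the real work lies.
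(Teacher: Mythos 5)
Your definition of the classes is not the one that makes the theorem true, and the step you yourself identify as the crux is not merely difficult but false. Setting $\pE_i=\cA\cap\Sigma^{-i}\cB$ and trying to show that the pieces of the tower realising $x\in\cB\ast\Sigma^{-1}\cB\ast\Sigma^{-2}\cB$ already lie in $\cA$ cannot work: the first piece $b_0$ does land in $\cA\cap\cB$ (your ``model mechanism'' is essentially the $E_2$/$E_5$ argument the paper uses inside \Cref{lem:e0_perp}), but the map $b_0\to x$ need not be a monomorphism of $\cA$, and the remaining cones need not lie in $\cA$ at all. Concretely, in the Jensen--Madsen--Su situation take $\Lambda=kQ/(\beta\alpha)$ with $Q\colon 1\xrightarrow{\alpha}2\xrightarrow{\beta}3$ and the tilting module $T=S_1\oplus P_1\oplus P_2$ of projective dimension $2$; then $\cA\cap\Sigma^{-i}\cB$ is the class $\pF^i$ of modules with $\Ext^{j}(T,-)$ concentrated in degree $i$. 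For $x=S_2$ one has $\Hom(T,S_2)\neq 0$ and $\Ext^1(T,S_2)\neq 0$, so $S_2$ lies in none of $\pF^0,\pF^1,\pF^2$; since its only subobjects are $0$ and $S_2$, it admits no filtration with quotients in these classes. (Its tower has first piece $P_2\in\cA\cap\cB$ mapping \emph{onto} $S_2$ with cone $\Sigma S_3\in\Sigma\cA$, illustrating exactly how the upgrade from a $\cT$-tower to a subobject filtration fails.) This is precisely why both the paper and Jensen--Madsen--Su must take $\pE_0=\filt{\Gen_\cA(\cA\cap\cB)}_\cA$ and $\pE_2=\filt{\Sub_\cA(\cA\cap\Sigma^{-2}\cB)}_\cA$, keeping only $\pE_1=\cA\cap\Sigma^{-1}\cB$; the closure under quotients (resp.\ subobjects) and extensions is what the example forces.

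The paper's actual route is different from your tower-upgrading plan and avoids the obstruction: it computes $\pE_0^{\perp_\cA}=\cA\cap(\Sigma^{-1}\cB\ast\Sigma^{-2}\cB)$ and ${}^{\perp_\cA}\pE_2=\cA\cap(\cB\ast\Sigma^{-1}\cB)$ (\Cref{lem:e0_perp,lem:perp_e2}, where the $E_5$ hypothesis and both $\ast$-containments are spent, via the splitting argument you sketched), then uses the noetherian hypothesis through Polishchuk's theorem (\Cref{res:polishchuk:torsion_pairs}) to promote $\pE_0$ and $\pE_2$ to torsion pairs, and builds the filtration by applying the two nested torsion pairs successively, identifying the middle quotient with $\pE_1$ via $\cA\cap(\cB\ast\Sigma^{-1}\cB)\cap(\Sigma^{-1}\cB\ast\Sigma^{-2}\cB)=\cA\cap\Sigma^{-1}\cB$. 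In particular the noetherian assumption enters only to turn torsion classes into torsion pairs, not (as you guessed) in an analysis of the tower pieces, and no functoriality of towers is needed anywhere. Your orthogonality computation for $\cA\cap\Sigma^{-i}\cB$ is fine as far as it goes, but to salvage the proposal you would have to replace your $\pE_0,\pE_2$ by the closures above and then argue existence of the filtration by torsion-theoretic means rather than from the tower.
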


Notice that the condition that $\cT(\cA, \Sigma^{-i}\cA)=0$ for $1\leq i \leq 5$ is satisfied if $\cA$ is the heart of a t-structure (see \cite[lem.~3.1]{holm2013sparseness}).
Furthermore, the condition that $\cB\subseteq\Sigma^2\cA\ast\Sigma\cA\ast\cA$
and $\cA\subseteq\cB\ast\Sigma^{-1}\cB\ast\Sigma^{-2}\cB$ 
essentially says that $\cA$ and $\cB$ are not too far apart,
which as an example would be the case if $\cB$ was induced from a tilting object
of projective dimension $\leq 2$ as in \cite{jensen2013filtrations}.

\medbreak
In \Cref{sec:examples} we will apply \Cref{intro:main_res} to an example of proper abelian subcategories that cannot be seen as hearts of t-structures.

\section{Background}
\renewcommand{\thetheorem}{\arabic{section}.\arabic{theorem}}
\setcounter{theorem}{0}

\subsection{Abelian Categories}
\begin{definition}
    Let $\cT$ be a triangulated category. Given full subcategories $\pX,\pZ\subseteq \cT$ define the full subcategory
    \begin{equation*}
        \pX\ast_\cT\pZ = \set{y\in\cT\mid \text{there exists a triangle }x\to y\to z\to \Sigma x \text{ with } x\in\pX,z\in\pZ}.
    \end{equation*}
\end{definition}

Similarly we can define $\ast$ for an abelian category.

\begin{definition}
    Let $\cA$ be an abelian category. Given full subcategories $\pX,\pZ\subseteq \cA$ define the full subcategory
    \begin{equation*}
        \pX\ast_\cA\pZ = \set{y\in\cA\mid \text{there exists a short exact sequence }x\mono y\twoheadrightarrow z \text{ with } x\in\pX,z\in\pZ}.
    \end{equation*}
\end{definition}

\begin{notation}
    We will omit the subscript of $\ast$ if it is clear in which category the operation is performed.
\end{notation}

\noindent It is well-known that the operation $\ast$ is associative,
both in the context of triangulated categories and abelian categories.

\begin{notation}
    Let $\cA$ be an abelian category. Given objects $x,y\in\cA$ and a monomorphism $f:x\mono y$,
    we write $y/x \coloneqq \Cok(f)$.
\end{notation}

\begin{definition}
    Let $\cA$ be an abelian category, and let $S\subseteq \cA$ be a full subcategory. 
    \begin{enumerate}
    \item[$\bullet$] $\Gen_\cA(S) = \set{x \in \cA \mid \text{there exists an epimorphism } s\twoheadrightarrow x \text{ with }s\in S}$.
    \item[$\bullet$] $\Sub_\cA(S) = \set{x \in \cA \mid \text{there exists a monomorphism } x\mono s \text{ with }s\in S}$.
    \item[$\bullet$] $S^{\perp_{\cA}} = \set{x\in \cA \mid \Hom(S, x) = 0}.$
    \item[$\bullet$] ${}^{\perp_\cA}S = \set{x\in \cA \mid \Hom(x, S) = 0}.$
    \item[$\bullet$] $S$ is said to be \emph{extension closed} if $S\ast S\subseteq S$.
    \item[$\bullet$] Given $n\in\NN$ let $(S)_n$ be the following full subcategory
        \begin{equation*}
            (S)_n \coloneqq \left\{a\in\cA \middle| \begin{array}{l} a \text{ has a filtration } 0 = a_0\subseteq a_1\subseteq\cdots\subseteq a_n = a \\\text{s.t. } a_{i+1}/a_{i}\in S\cup \set{0}\end{array}\right\}.
        \end{equation*}
    Then define the \emph{extension closure of $S$} by $\langle S \rangle_\cA \coloneqq \bigcup_{n\in \NN} (S)_n $.
    \end{enumerate}
\end{definition}

\begin{notation}
    The subscripts of $\langle-\rangle, \Gen(-), \Sub(-), (-)^{\perp}$ and ${}^{\perp}(-)$ will be omitted if it is clear in which abelian category the operation is taking place.
\end{notation}

\begin{lemma}
    \label{res:filt_cok_Sn}
    Let $\cA$ be an abelian category, and let $S\subseteq \cA$ be a full subcategory.
    Given $x\in (S)_n$, with corresponding filtration $0=x_0\subseteq x_1 \subseteq \cdots\subseteq x_n = x$, then for all $0\leq i<n$ the inclusion $x_i\subseteq x$ has cokernel $x/x_i \in (S)_{n-i}$.
\end{lemma}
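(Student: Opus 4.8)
\textit{Proof proposal.} The plan is to push the given filtration forward along the quotient epimorphism $\pi\colon x\epi x/x_i$ and then identify the successive quotients by the isomorphism theorems. Fix $i$ with $0\leq i<n$. For each $j$ with $i\leq j\leq n$ set $\bar{x}_j\coloneqq x_j/x_i$, which we view as a subobject of $x/x_i$ via the monomorphism induced by the inclusion $x_j\subseteq x$ (equivalently, $\bar{x}_j=\pi(x_j)$). Then $\bar{x}_i=0$ and $\bar{x}_n=x/x_i$, and since $x_j\subseteq x_{j+1}$ for all $j$, taking images under $\pi$ yields a chain of subobjects $0=\bar{x}_i\subseteq\bar{x}_{i+1}\subseteq\cdots\subseteq\bar{x}_n=x/x_i$ of length $n-i$ in $\cA$.

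It remains to compute the cokernels of the inclusions $\bar{x}_j\subseteq\bar{x}_{j+1}$. By the third isomorphism theorem in the abelian category $\cA$, the inclusion $x_j/x_i\mono x_{j+1}/x_i$ fits into a short exact sequence $x_j/x_i\mono x_{j+1}/x_i\epi x_{j+1}/x_j$, so $\Cok(\bar{x}_j\mono\bar{x}_{j+1})\cong x_{j+1}/x_j\in S\cup\set{0}$ by hypothesis on the original filtration. Hence $0=\bar{x}_i\subseteq\bar{x}_{i+1}\subseteq\cdots\subseteq\bar{x}_n=x/x_i$ is a filtration of length $n-i$ with all quotients in $S\cup\set{0}$, which is exactly the statement that $x/x_i\in(S)_{n-i}$.

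The only thing needing a word of care is the passage to a general abelian category: the claim that images of a nested chain stay nested, and the validity of the third isomorphism theorem, are standard and can be verified either by a direct diagram chase producing the displayed short exact sequence $x_j/x_i\mono x_{j+1}/x_i\epi x_{j+1}/x_j$, or via the Freyd--Mitchell embedding. I do not expect any real obstacle here; the lemma is bookkeeping with the isomorphism theorems, so in the write-up I would simply exhibit that short exact sequence explicitly and conclude.
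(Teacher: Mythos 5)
Your proposal is correct and follows essentially the same route as the paper: the paper also pushes the filtration down to $x/x_i$, using \cite[lem.~3.5]{buhler2010exact} (which is exactly the Noether/third isomorphism theorem you invoke) to produce the short exact sequences $x_j/x_i\mono x_{j+1}/x_i\epi x_{j+1}/x_j$, and then iterates/inducts to obtain the filtration of $x/x_i$ with quotients in $S\cup\set{0}$. The only cosmetic difference is that the paper phrases the identification of quotients as an inductive application of that lemma rather than writing down the whole pushed-forward chain at once.
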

\begin{proof}

    Consider the following diagram of solid arrows, where $\mono$ represents the inclusions given by  the filtration, which we then complete into short exact sequences.
    \begin{equation*}
    \begin{tikzcd}
        x_i \ar[r, tail] \ar[d, equal]&
        x_{n-1} \ar[r, two heads]\ar[d, tail] & 
        x_{n-1}/x_i \ar[d, dashed, tail] \\
        x_i \ar[r, tail]&
        x_{n} \ar[r,two heads]\ar[d, two heads] & 
        x_{n}/x_i \ar[d, dashed, two heads] \\
                           &
        s\ar[r,equal] & 
        s,
    \end{tikzcd}
    \end{equation*}
    
    with $s\in S\cup \{0\}$.
    Using \cite[lem.~3.5]{buhler2010exact} we can fill out this diagram with the dashed arrows,
    such that the third column is a short exact sequence.
    In particular we get an inclusion $x_{n-1}/x_i \subseteq x_n/x_i$, with cokernel $(x_n/x_i)/(x_{n-1}/x_i)\cong s$.
    Using induction we can construct a filtration
    \begin{equation*}
        0 = x_i/x_i \subseteq x_{i+1}/x_i \subseteq \cdots \subseteq x_{n-1}/x_n \subseteq x_n/x_i,
    \end{equation*}

    where $(x_j/x_i)/(x_{j-1}/x_i)\in S$ for $i<j\leq n$.
    In particular, we get that $x_n/x_i \in (S)_{n-i}$.
\end{proof}

\begin{lemma}
    \label{res:ext_of_Sn}
    Let $\cA$ be an abelian category, and let $S\subseteq \cA$ be a full subcategory.
    Given $n,m\in\NN$ then $(S)_{m+n} = (S)_m \ast (S)_n$.
\end{lemma}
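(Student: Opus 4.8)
The plan is to prove the two inclusions $(S)_m \ast (S)_n \subseteq (S)_{m+n}$ and $(S)_{m+n} \subseteq (S)_m \ast (S)_n$ separately, each by an elementary argument with finite filtrations; no homological input beyond the isomorphism theorems in $\cA$ and the already-proven \Cref{res:filt_cok_Sn} is needed.

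For the inclusion $(S)_m \ast (S)_n \subseteq (S)_{m+n}$, I would start from $y \in (S)_m \ast (S)_n$, so there is a short exact sequence $x \mono y \epi z$ with $x \in (S)_m$ and $z \in (S)_n$. Fix a filtration $0 = x_0 \subseteq x_1 \subseteq \cdots \subseteq x_m = x$ witnessing $x \in (S)_m$ and a filtration $0 = z_0 \subseteq z_1 \subseteq \cdots \subseteq z_n = z$ witnessing $z \in (S)_n$. Pulling the $z_i$ back along the epimorphism $\pi\colon y \epi z$ yields subobjects $x = y_0 \subseteq y_1 \subseteq \cdots \subseteq y_n = y$ (with $y_i$ the kernel of $y \xto{\pi} z \epi z/z_i$), and the subobject correspondence for $\pi$ together with the third isomorphism theorem gives $y_{i+1}/y_i \cong z_{i+1}/z_i \in S \cup \{0\}$. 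Concatenating with the filtration of $x$ produces
$0 = x_0 \subseteq \cdots \subseteq x_m = y_0 \subseteq \cdots \subseteq y_n = y$,
a filtration of length $m+n$ with every subquotient in $S \cup \{0\}$, so $y \in (S)_{m+n}$.

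For the reverse inclusion $(S)_{m+n} \subseteq (S)_m \ast (S)_n$, take $y \in (S)_{m+n}$ with witnessing filtration $0 = y_0 \subseteq y_1 \subseteq \cdots \subseteq y_{m+n} = y$, and set $x \coloneqq y_m$. The truncated filtration $0 = y_0 \subseteq \cdots \subseteq y_m = x$ shows $x \in (S)_m$. If $n \geq 1$ then \Cref{res:filt_cok_Sn}, applied to the given filtration with $i = m < m+n$, gives $y/x = y/y_m \in (S)_{(m+n)-m} = (S)_n$; if $n = 0$ then $x = y_m = y$ and the claim is immediate. In either case the short exact sequence $x \mono y \epi y/x$ exhibits $y$ as an element of $(S)_m \ast (S)_n$.

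The only step requiring a little care is the first inclusion, specifically the assertion that pulling a filtration of the quotient $z$ back through $y \epi z$ yields subobjects of $y$ whose successive quotients are exactly those of $z$; this is the standard order-isomorphism between subobjects of $z$ and subobjects of $y$ containing $\Ker(\pi)$, combined with the third isomorphism theorem in the abelian category $\cA$. The remaining content is purely bookkeeping with indices of finite filtrations, and the degenerate cases $m = 0$ or $n = 0$ are handled by the observations $(S)_0 = \set{0}$ and $(S)_k \ast \set{0} = \set{0} \ast (S)_k = (S)_k$.
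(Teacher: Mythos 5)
Your proposal is correct and follows essentially the same route as the paper: one inclusion via \Cref{res:filt_cok_Sn} applied at $i=m$, and the other by transporting the filtration of $z$ to a filtration of $y$ above $x$ (your preimage/subobject-correspondence description is the same construction the paper carries out via pullback squares and induction). Your explicit treatment of the degenerate cases $m=0$, $n=0$ is a minor tidy addition but not a different argument.
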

\begin{proof}
    It follows from \Cref{res:filt_cok_Sn} that $(S)_{m+n} \subseteq (S)_m \ast (S)_n$,
    thus to show that they are equal, it is enough to show that $(S)_{m+n} \supseteq (S)_m \ast (S)_n$.
    Let $y\in (S)_{m}\ast (S)_n$, this means that there is a short exact sequence
    \begin{equation}
     \label{eq:ses_filt}
    \begin{tikzcd}
        x \ar[r, tail] & y\ar[r, two heads] & z,
    \end{tikzcd}
    \end{equation}

    with $x\in (S)_m$ and $z\in (S)_n$.
    Since $z\in (S)_n$, there is a filtration 
    $0= z_0\subseteq z_1\subseteq \cdots\subseteq z_n = z$, 
    where $z_i/z_{i-1}\in S\cup \set{0}$.
    Using the inclusion $z_{n-1}\subseteq z_n$ together with (\ref{eq:ses_filt}),
    we can construct the following pullback diagram of solid arrows
    
    \begin{equation*}
    \begin{tikzcd}
        x \ar[r, tail]\ar[d, equal] & y_{n-1}\ar[r, two heads]\ar[d, tail]\ar[dr,PB] & z_{n-1}\ar[d, tail]\\
        x \ar[r, tail] & y\ar[r, two heads] \ar[d, dashed, two heads]& z\ar[d, dashed, two heads]\\
        & s\ar[r, equal] & s.
    \end{tikzcd}
    \end{equation*}

    By \cite[prop.~2.12]{buhler2010exact}, the upper right square is bicartesian,
    meaning that the columns can be completed to short exact sequences, as illustrated by the dashed lines,
    such that $y/y_{n-1}\cong z/z_{n-1} \in S\cup\set{0}$.
    Notice that $x\subset y_{n-1}$.
    Using the same trick, an induction argument will construct a filtration $y_0\subseteq y_1\subseteq \cdots\subseteq y_n = y$,
    where $y_{i}/ y_{i-1}\in S\cup \set{0}$. 
    Furthermore, for each $i$ we get a short exact sequence
    \begin{equation*}
    \begin{tikzcd}
        x \ar[r, tail] & y_{i}\ar[r, two heads] & z_{i}.
    \end{tikzcd}
    \end{equation*}

    In particular we get such a short exact sequence for $i=0$, and since $z_i=0$ this gives that $x\cong y_0$.
    Notice that this short exact sequence also gives that $x\subseteq y_i$ for all $i$.
    Combining the filtration we have of $y$ so far, together with that of $x$ we get a filtration
    \begin{equation*}
       0 = x_0\subseteq x_1 \subseteq \cdots \subseteq x_m \subseteq y_1 \subseteq  \cdots \subseteq y_n = y,
    \end{equation*}

    where the cokernel of each inclusion is contained in $S\cup \set{0}$, meaning that $y\in (S)_{m+n}$.
    With this we can conclude that $(S)_{m+n} = (S)_m \ast (S)_n$.
\end{proof}

\begin{corollary}
    Let $\cA$ be an abelian category, and let $S\subseteq \cA$ be a full subcategory.
    Then $\langle S \rangle$ is an extension-closed subcategory.
\end{corollary}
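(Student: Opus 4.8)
The plan is to check the definition directly: $\langle S\rangle$ is a full subcategory by construction, so the only thing to prove is the containment $\langle S\rangle\ast\langle S\rangle\subseteq\langle S\rangle$. I would start from an object $y\in\langle S\rangle\ast\langle S\rangle$, unwind the definition of $\ast_\cA$ to obtain a short exact sequence $x\mono y\epi z$ with $x,z\in\langle S\rangle$, and then use $\langle S\rangle=\bigcup_{n\in\NN}(S)_n$ to choose $m,n\in\NN$ with $x\in(S)_m$ and $z\in(S)_n$.

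The key step is then to apply \Cref{res:ext_of_Sn}: from $x\in(S)_m$ and $z\in(S)_n$ together with the above short exact sequence we get $y\in(S)_m\ast(S)_n=(S)_{m+n}\subseteq\langle S\rangle$. This already finishes the argument. (If one wants equality rather than mere containment, the same reasoning gives $\langle S\rangle\ast\langle S\rangle=\bigcup_{m,n}(S)_m\ast(S)_n=\bigcup_{m,n}(S)_{m+n}=\bigcup_{k}(S)_k=\langle S\rangle$, using associativity of $\ast$ only implicitly.)

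I expect no real obstacle here: all the actual work has been front-loaded into \Cref{res:filt_cok_Sn} and \Cref{res:ext_of_Sn}, which together say that a filtration of a subobject and a filtration of the corresponding quotient splice together into a filtration of the ambient object, with filtration length adding. The corollary is then just the remark that the union over all finite filtration lengths is closed under this splicing operation.
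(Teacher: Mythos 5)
Your argument is correct and coincides with the paper's own proof: unwind the short exact sequence defining $y\in\langle S\rangle\ast\langle S\rangle$, pick $m,n$ with the end terms in $(S)_m$ and $(S)_n$, and apply \Cref{res:ext_of_Sn} to conclude $y\in(S)_{m+n}\subseteq\langle S\rangle$. No gaps and no meaningful difference in approach.
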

\begin{proof}
    Let $y\in \langle S\rangle \ast \langle S\rangle$.
    This means there is a short exact sequence
    \begin{equation*}
    \begin{tikzcd}
        x\ar[r,tail]&
        y\ar[r,two heads]&
        z
    \end{tikzcd}
    \end{equation*}
    
    with $x,z\in \langle S\rangle = \cup_{n\in \NN} (S)_n$.
    Thus there exists $n,m\in\NN$ such that $x\in (S)_n$ and $z\in (S)_m$.
    \Cref{res:ext_of_Sn} now gives that $y \in (S)_{n+m} \subseteq \langle S\rangle$.
\end{proof}

\begin{lemma}
    \label{res:quotient_subobject_closed_extensios}
    Let $\cA$ be an abelian category, and let $\pX,\pY\subseteq \cA$ be full subcategories.
    \begin{enumerate}
        \item\label{item:qsce:quotient} If $\pX,\pZ$ are closed under quotients, then so is $\pX\ast\pZ$.
        \item\label{item:qsce:subobject} If $\pX,\pZ$ are closed under subobjects, then so is $\pX\ast\pZ$.
    \end{enumerate}
\end{lemma}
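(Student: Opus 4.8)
The plan is to prove \ref{item:qsce:quotient} by a direct diagram chase and then obtain \ref{item:qsce:subobject} by passing to the opposite category (alternatively, by a dual chase).

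For \ref{item:qsce:quotient}, I would start with $y\in\pX\ast\pZ$, so there is a short exact sequence $x\mono y\epi z$ with $x\in\pX$ and $z\in\pZ$, and let $e\colon y\epi y'$ be an epimorphism. Factor the composite $x\mono y\xto{e}y'$ through its image as $x\epi\bar x\mono y'$. Since $\pX$ is closed under quotients and $\bar x$ is a quotient of $x$, we get $\bar x\in\pX$. It then remains to see that $y'/\bar x\in\pZ$. For this, observe that the kernel of $y\xto{e}y'\epi y'/\bar x$ contains $x$ (because $x$ maps into $\bar x$), so this epimorphism factors as $y\epi z\to y'/\bar x$, and the second map is an epimorphism because the composite is. Hence $y'/\bar x$ is a quotient of $z$, so $y'/\bar x\in\pZ$, and the short exact sequence $\bar x\mono y'\epi y'/\bar x$ exhibits $y'\in\pX\ast\pZ$.

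For \ref{item:qsce:subobject}, a full subcategory of $\cA$ is closed under subobjects exactly when the corresponding full subcategory of $\cA^{\op}$ is closed under quotients, and a short exact sequence $x\mono y\epi z$ in $\cA$ becomes $z\mono y\epi x$ in $\cA^{\op}$, so $\pX\ast_\cA\pZ$ corresponds to $\pZ^{\op}\ast_{\cA^{\op}}\pX^{\op}$. Applying \ref{item:qsce:quotient} inside $\cA^{\op}$ to the subcategories $\pZ^{\op}$ and $\pX^{\op}$ (both closed under quotients there) shows that $\pZ^{\op}\ast_{\cA^{\op}}\pX^{\op}$ is closed under quotients in $\cA^{\op}$, i.e.\ that $\pX\ast_\cA\pZ$ is closed under subobjects in $\cA$. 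If one prefers a direct argument, given a subobject $y'\mono y$ set $x' = x\times_y y'$; then $x'\mono x$ gives $x'\in\pX$, and the parallelogram isomorphism $y'/x'\cong (x+y')/x\subseteq y/x = z$ gives $y'/x'\in\pZ$, so $x'\mono y'\epi y'/x'$ witnesses $y'\in\pX\ast\pZ$.

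The only point that needs care is checking that the maps produced by the image factorization (resp.\ by the pullback) are again epi resp.\ mono and that the induced factorizations are exact; this is routine in an abelian category and, if desired, can be sourced from the pullback/pushout lemmas of \cite{buhler2010exact} already used above. I do not expect any genuine obstacle here.
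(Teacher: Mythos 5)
Your proof of (\ref{item:qsce:quotient}) is essentially the paper's own argument: factor the composite $x\mono y\epi y'$ through its image, note that the image lies in $\pX$ as a quotient of $x$, and observe that the induced map $z\to y'/\bar x$ is an epimorphism, so $y'/\bar x\in\pZ$ and $\bar x\mono y'\epi y'/\bar x$ exhibits $y'\in\pX\ast\pZ$. For (\ref{item:qsce:subobject}) the paper merely says it follows by a similar argument, and your opposite-category (or direct pullback, via $x'=x\times_y y'$ and the second isomorphism theorem) argument is a correct and explicit way of carrying that out, so there is nothing to flag.
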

\begin{proof}
    (\ref{item:qsce:quotient}) Assume we have an epimorphism $v:y\twoheadrightarrow a$ with $a\in\cA$ and $y\in\pX\ast\pZ$.
    That means there is a diagram with $x\in\pX$ and $z\in\pZ$ and the row short exact.

    \begin{equation*}
    \begin{tikzcd}
        x\rar[rightarrowtail,"f"] &y \rar[twoheadrightarrow,"g"]\dar["v",twoheadrightarrow] & z\\
        & a  &
    \end{tikzcd}
    \end{equation*}
    
    Notice that $vf$ factors over its own image, thus giving a commutative diagram of solid arrows

    \begin{equation*}
    \begin{tikzcd}
        x\rar[rightarrowtail,"f"]\dar[twoheadrightarrow, "u"] &y \rar[twoheadrightarrow,"g"]\dar["v",twoheadrightarrow] & z\dar[twoheadrightarrow, "w", dashed]\\
        \Im(vf)\rar[rightarrowtail,"\alpha"]& a\rar[twoheadrightarrow, "\beta"]  & \Cok(\alpha).
    \end{tikzcd}
    \end{equation*}

    Since $\beta v f = \beta\alpha u = 0$ there exists a morphism $w:z\rightarrow \Cok(\alpha)$, making the diagram above commute.
    Notice that $w$ is an epimorphism since $\beta v$ is an epimorphism. 
    Hence $\Im(vf)\in\pX$ and $\Cok(\alpha)\in \pZ$ and thus
    $a\in\pX\ast\pZ$.

    \medbreak
    (\ref{item:qsce:subobject}) Follows by a similar argument to (\ref{item:qsce:quotient}).
\end{proof}

\begin{lemma}
    \label{res:filt_fac_closed_under_fac}
    Let $\cA$ be an abelian category, and let $S\subseteq \cA$ be a full subcategory.
    Then $\langle\Gen(S)\rangle$ is closed under quotients, and $\langle\Sub(S)\rangle$ is closed under subobjects.
\end{lemma}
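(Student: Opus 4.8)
The plan is to deduce this directly from \Cref{res:ext_of_Sn} and \Cref{res:quotient_subobject_closed_extensios}, so that all the real work has already been done. First I would check the degenerate building block: $\Gen(S)$ is closed under quotients, since if $s\epi x$ exhibits $x\in\Gen(S)$ and $x\epi x'$ is any epimorphism, then the composite $s\epi x\epi x'$ is again an epimorphism, whence $x'\in\Gen(S)$. As $(\Gen(S))_1 = \Gen(S)\cup\set{0}$ and the zero object contributes no new quotients, $(\Gen(S))_1$ is closed under quotients. Dually, $\Sub(S)$ and $(\Sub(S))_1 = \Sub(S)\cup\set{0}$ are closed under subobjects.

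Next I would propagate this up the filtration length. By \Cref{res:ext_of_Sn} together with associativity of $\ast$, for every $n\geq 1$ we have $(\Gen(S))_n = (\Gen(S))_1\ast\cdots\ast(\Gen(S))_1$ with $n$ factors; since each factor is closed under quotients, iterating part (\ref{item:qsce:quotient}) of \Cref{res:quotient_subobject_closed_extensios} shows that $(\Gen(S))_n$ is closed under quotients. (Equivalently: induct on $n$, using $(\Gen(S))_{n+1} = (\Gen(S))_n\ast(\Gen(S))_1$.) The same computation with part (\ref{item:qsce:subobject}) in place of part (\ref{item:qsce:quotient}) gives that each $(\Sub(S))_n$ is closed under subobjects.

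Finally I would take the union. Because $\langle\Gen(S)\rangle = \bigcup_{n\in\NN}(\Gen(S))_n$, a quotient of an object $x\in\langle\Gen(S)\rangle$ is a quotient of an object of $(\Gen(S))_n$, where $n$ is chosen with $x\in(\Gen(S))_n$, hence lies in $(\Gen(S))_n\subseteq\langle\Gen(S)\rangle$; so $\langle\Gen(S)\rangle$ is closed under quotients, and likewise $\langle\Sub(S)\rangle = \bigcup_{n\in\NN}(\Sub(S))_n$ is closed under subobjects.

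I do not expect a genuine obstacle here: the only points needing a moment's attention are the one-line verification that $\Gen(S)$ is quotient-closed and the bookkeeping around the zero object inside $(S)_1$, and the substance is entirely contained in the two lemmas cited above, which are available to us.
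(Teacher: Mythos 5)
Your proof is correct and follows essentially the same route as the paper, whose proof is just the remark that the claim ``follows directly from \Cref{res:quotient_subobject_closed_extensios} by the use of induction''; you have simply made the induction explicit, checking that $\Gen(S)$ (resp.\ $\Sub(S)$) is quotient-closed (resp.\ subobject-closed) and using \Cref{res:ext_of_Sn} to write $(\Gen(S))_n$ as an iterated $\ast$ before passing to the union.
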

\begin{proof}
    This follows directly from \Cref{res:quotient_subobject_closed_extensios} by the use of induction.
\end{proof}

\begin{lemma}
    \label{lem:perp:inclusions}
    Let $\cA$ be an abelian category, and let $S\subseteq \cA$, then the following hold.

    \begin{enumerate}
        \item\label{item:perp:1} $S^\perp = \Gen(S)^\perp$, 
        \item\label{item:perp:2} $S^\perp = \filt{S}^\perp$,
        \item\label{item:perp:3} ${}^\perp S = {}^\perp\Sub(S)$, 
        \item\label{item:perp:4} ${}^\perp S = {}^\perp\filt{S}$.
    \end{enumerate}
\end{lemma}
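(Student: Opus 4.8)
The plan is to establish each equality through its two inclusions, of which one direction is always immediate: since $S\subseteq\Gen(S)$, $S\subseteq\filt{S}$, $S\subseteq\Sub(S)$, and both $(-)^\perp$ and ${}^\perp(-)$ are inclusion-reversing in the displayed argument, we get for free that $\Gen(S)^\perp\subseteq S^\perp$, $\filt{S}^\perp\subseteq S^\perp$, ${}^\perp\Sub(S)\subseteq{}^\perp S$, and ${}^\perp\filt{S}\subseteq{}^\perp S$. So in all four cases only the reverse inclusion needs an argument, and each reduces to a short diagram chase built on the universal properties of kernels and cokernels.

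For \ref{item:perp:1}, I would take $x\in S^\perp$ and an arbitrary $g\in\Gen(S)$, so there is an epimorphism $p\colon s\epi g$ with $s\in S$; given any $\varphi\colon g\to x$, the composite $\varphi p\colon s\to x$ lies in $\Hom(S,x)=0$, and since $p$ is an epimorphism this forces $\varphi=0$. Hence $\Hom(\Gen(S),x)=0$, i.e.\ $x\in\Gen(S)^\perp$. Part \ref{item:perp:3} is the exact dual: for $x\in{}^\perp S$ and $s'\in\Sub(S)$ choose a monomorphism $\iota\colon s'\mono s$ with $s\in S$; any $\varphi\colon x\to s'$ composes to $\iota\varphi\in\Hom(x,S)=0$, and $\iota$ being a monomorphism forces $\varphi=0$, so $x\in{}^\perp\Sub(S)$.

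For \ref{item:perp:2}, recall $\filt{S}=\bigcup_{n\in\NN}(S)_n$, so it suffices to show by induction on $n$ that $\Hom((S)_n,x)=0$ whenever $x\in S^\perp$. The case $n=0$ is trivial since $(S)_0=\{0\}$. For the inductive step I would use \Cref{res:ext_of_Sn} to write $(S)_n=(S)_{n-1}\ast(S)_1$ with $(S)_1=S\cup\{0\}$: any $a\in(S)_n$ sits in a short exact sequence $a'\mono a\epi a''$ with $a'\in(S)_{n-1}$ and $a''\in S\cup\{0\}$; given $\varphi\colon a\to x$, its restriction along $a'\mono a$ vanishes by the induction hypothesis, so $\varphi$ factors through the cokernel $a''$ via some $\psi\colon a''\to x$, and $\psi\in\Hom(S\cup\{0\},x)=0$, whence $\varphi=0$. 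Part \ref{item:perp:4} is the dual statement, using kernels in place of cokernels: writing $(S)_n=(S)_1\ast(S)_{n-1}$, a map $x\to a$ composes to zero into $a''\in(S)_{n-1}$ by induction, hence factors through the subobject $a'\in S\cup\{0\}$, and that factorization is again zero.

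I do not anticipate a genuine obstacle: the statement is a routine consequence of the universal properties of kernels and cokernels together with \Cref{res:ext_of_Sn}. The only points needing mild care are the base case of the induction (the zero object) and being consistent about which of the two associative decompositions of $(S)_n$ one uses, so that the relevant factorization lands in $S\cup\{0\}$ rather than in $(S)_{n-1}$.
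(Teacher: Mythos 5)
Your proposal is correct and follows essentially the same route as the paper: the easy inclusions come from $(-)^\perp$ and ${}^\perp(-)$ being inclusion-reversing, \ref{item:perp:1} and \ref{item:perp:3} are the direct epi/mono cancellation arguments, and \ref{item:perp:2} and \ref{item:perp:4} are proved by induction via \Cref{res:ext_of_Sn}, factoring a map through the term of the short exact sequence on which it must vanish. The only cosmetic difference is that you decompose $(S)_n$ as $(S)_{n-1}\ast(S)_1$ while the paper uses $(S)_1\ast(S)_{n-1}$; both work identically.
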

\begin{proof}
    (\ref{item:perp:1}) The inclusion $S^\perp \supseteq \Gen(S)^\perp$ follows directly from the fact that $S\subseteq \Gen(S)$. For the other inclusion let $x\in S^\perp$, and let $z\in \Gen(S)$. This means that there exists an epimorphism $f: s\twoheadrightarrow z$, with $s\in S$.
    Now assume there is a morphism $h: z\rightarrow x$. Since $x\in S^\perp$ we get that $hf=0$, and since $f$ is an epimorphism, this implies that $h = 0$, and therefore $x\in\Gen(S)^\perp$.

    \begin{equation*}
        \begin{tikzcd}[
            /tikz/column 3/.append style={anchor=base west}
        ]
        s \rar["f", twoheadrightarrow]\ar[dr, "0", dashed]&z\dar["h"] & \hspace{-2em}\in\Gen(S)\\
                                       &x& \hspace{-2em}\in S^{\perp}.
    \end{tikzcd}
    \end{equation*}

    \medbreak
    (\ref{item:perp:2}) The inclusion $S^\perp \supseteq \filt{S}^\perp$ follows directly from the fact that $S\subseteq \filt{S}$. 
    To show the other inclusion, notice that $S^\perp = (S)_1^\perp$, and therefore it will be enough to show that $(S)_{n-1}^\perp \subseteq (S)_{n}^\perp$.
    Let $x\in(S)_{n-1}^\perp$, and let $z\in (S)_{n}$.
    By \Cref{res:ext_of_Sn} there is a short exact sequence $s_1\rightarrowtail z \twoheadrightarrow s_2$, with $s_1\in S$ and $s_2 \in (S)_{n-1}$.
    Assume we have a morphism $f: z\rightarrow x$, that gives us the following diagram
    \begin{equation*}
    \begin{tikzcd}
        s_1\rar[rightarrowtail,"\alpha"]&z \rar[twoheadrightarrow,"\beta"]\dar["f"] & s_2\ar[dl, dashed,"\gamma"]\\
                               & x.
    \end{tikzcd}
    \end{equation*}

    Since $x\in (S)_{n-1}^\perp$ we get that $f\alpha = 0$, and thus there exists a morphism $\gamma:s_2\rightarrow x$ such that $f = \gamma\beta$. 
    However, $s_2\in (S)_{n-1}$ which implies that $\gamma = 0$, hence $f=0$.

    \medbreak
    The proof of (\ref{item:perp:3}) and (\ref{item:perp:4}) is similar to that of (\ref{item:perp:1}) and (\ref{item:perp:2}).
\end{proof}

\begin{definition}
    Let $\cA$ be an abelian category, and let $\pF,\pT\subseteq \cA$ be full subcategories.
    we say that $(\pT, \pF)$ is a \emph{torsion pair} if
    \begin{enumerate}
        \item $\Hom_\cA(\pT,\pF) = 0$,
        \item $\cA = \pT\ast\pF$.
    \end{enumerate}
    Here $\pT$ is called the \emph{torsion part}, and $\pF$ is called the \emph{torsion-free part}.
\end{definition}

We can also define the torsion part and torsion-free part by themselves.

\begin{definition}
    Let $\cA$ be an abelian category, $\pX\subseteq \cA$ a full subcategory, then
    \begin{enumerate}
        \item $\pX$ is called a \emph{torsion class} if it is closed under quotients and extensions,
        \item $\pX$ is called a \emph{torsion-free class} if it is closed under subobjects and extensions.
    \end{enumerate}
\end{definition}

Given a torsion pair $(\pT, \pF)$ in an abelian category $\cA$, it is straightforward to see that the torsion part $\pT$ will be a torsion class, and that the torsion-free part $\pF$ will be a torsion-free class.
However, it is important to note that given a torsion class $\pT'\subseteq \cA$, it does not need to be part of a torsion pair, and similar for torsion-free classes.
If we want every torsion class to be part of a torsion pair, we need some assumptions on $\cA$.

\begin{definition}
    Let $\cA$ be an abelian category, then $\cA$ is said to be \emph{noetherian} if for all objects $x\in \cA$,
    ascending chains of subobjects $x_1\subseteq x_2\subseteq x_3\subseteq \cdots$ of $x$ stabilise.
    That is, there exists $n\in \NN$ such that $x_n = x_{n+i}$ for all $i\in \NN$.
\end{definition}

\begin{theorem}[{\cite[lem. 1.1.3]{polishchuk2007constant}}]
    \label{res:polishchuk:torsion_pairs}
    Let $\cA$ be a noetherian abelian category, and let $\pT\subseteq \cA$ be a torsion class, then $(\pT, \pT^\perp)$ is a torsion pair.
    Similarly, given a torsion-free class $\pF\subseteq\cA$ then $({}^\perp \pF,\pF)$ is a torsion pair.
\end{theorem}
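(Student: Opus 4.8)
The plan is to prove the first assertion by attaching to every $x\in\cA$ a largest subobject $t\subseteq x$ lying in $\pT$, checking that the quotient $x/t$ then lies in $\pT^\perp$, and afterwards to deduce the torsion-free statement from the torsion statement. For the first assertion, fix $x$ and consider the family of subobjects of $x$ that belong to $\pT$; it is nonempty, as it contains $0$. Since $\pT$ is closed under extensions it is closed under finite direct sums, and since it is also closed under quotients the sum $t_1+t_2$ of two members of this family — a quotient of $t_1\oplus t_2$ — again belongs to $\pT$, so the family is directed upward. The noetherian hypothesis forces ascending chains of subobjects of $x$ to stabilise, so the family has a maximal element, and by upward directedness that maximal element $t$ is the \emph{largest} subobject of $x$ lying in $\pT$. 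I then claim $x/t\in\pT^\perp$: if there were a nonzero map $s\to x/t$ with $s\in\pT$, its image would be a nonzero subobject of $x/t$ and a quotient of $s$, hence would lie in $\pT$; pulling it back along $x\epi x/t$ produces a subobject $t'\subseteq x$ with $t\subsetneq t'$ and a short exact sequence $t\mono t'\epi t'/t$ whose two outer terms lie in $\pT$, so $t'\in\pT$, contradicting the maximality of $t$. Hence $x\in\pT\ast\pT^\perp$ for every $x$, i.e. $\cA=\pT\ast\pT^\perp$; since $\Hom(\pT,\pT^\perp)=0$ is immediate from the definition of $\pT^\perp$, the pair $(\pT,\pT^\perp)$ is a torsion pair.

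For the second assertion, put $\pT_0:={}^\perp\pF$. This is a torsion class: it is closed under quotients because a map into $\pF$ out of a quotient of $a\in\pT_0$ pulls back to a (necessarily zero) map out of $a$ and hence vanishes; and it is closed under extensions because a map into $\pF$ out of the middle term of an extension by objects of $\pT_0$ vanishes on the subobject, factors through the quotient, and is therefore zero. By the first assertion $(\pT_0,\pT_0^\perp)$ is a torsion pair, so it suffices to show $\pT_0^\perp=\pF$. The inclusion $\pF\subseteq\pT_0^\perp$ is automatic: for $F\in\pF$ and $T\in\pT_0$ the image of any map $T\to F$ is at once a quotient of $T$ and a subobject of $F$, so it lies in $\pT_0\cap\pF$, which is zero because $\Hom(\pT_0,\pF)=0$, whence the map vanishes. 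For the reverse inclusion, given $x\in\pT_0^\perp$ I would use the noetherian hypothesis and closure of $\pF$ under subobjects to pick a maximal subobject $f^*\subseteq x$ with $f^*\in\pF$, note that maximality together with closure of $\pF$ under extensions forces $x/f^*$ to have no nonzero subobject in $\pF$, and then deduce $f^*=x$ by playing this against the membership $x\in\pT_0^\perp$ and the torsion pair $(\pT_0,\pT_0^\perp)$ from the first part, running a noetherian induction along proper quotients of $x$ (which is a well-founded relation precisely because ascending chains of subobjects of $x$ stabilise).

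The step I expect to be the real obstacle is this last inclusion $\pT_0^\perp\subseteq\pF$. It is not literally dual to the first assertion — the noetherian condition is not self-dual, so one cannot simply apply the first assertion in $\cA^{\op}$ — and the delicate point is controlling how the torsion subobject of $x/f^*$ with respect to $(\pT_0,\pT_0^\perp)$ sits relative to the chosen maximal $\pF$-subobject $f^*$ (for instance when that torsion subobject is nonzero). By contrast the first assertion is short once the largest torsion subobject is in hand: its existence is the genuinely noetherian input, and the check that the cokernel is torsion-free is the routine argument given above.
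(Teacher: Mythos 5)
Your proof of the first assertion is complete and correct, and it is the standard argument (the paper itself offers no proof of this theorem, citing [Pol07, lem.~1.1.3]): closure of $\pT$ under quotients and extensions makes the family of $\pT$-subobjects of $x$ upward directed, the noetherian hypothesis produces a maximal, hence largest, such subobject $t$, and the pullback argument shows $x/t\in\pT^\perp$. Your reduction of the second assertion is also sound as far as it goes: ${}^\perp\pF$ is indeed a torsion class, so by the first part $({}^\perp\pF,({}^\perp\pF)^\perp)$ is a torsion pair, and $\pF\subseteq({}^\perp\pF)^\perp$ holds by your image argument.

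The step you flag as the obstacle, however, is a genuine gap, and it cannot be closed: under the stated hypotheses the inclusion $({}^\perp\pF)^\perp\subseteq\pF$ --- and with it the second assertion --- fails. Take $\cA$ to be the (noetherian) category of finitely generated abelian groups and $\pF$ the class of finite abelian $p$-groups for a fixed prime $p$; this is closed under subobjects and extensions. Then ${}^\perp\pF$ is the class of finite groups of order prime to $p$, so $({}^\perp\pF)^\perp$ contains $\ZZ$, which is not in $\pF$; equivalently, $\ZZ$ admits no short exact sequence $t\mono \ZZ\epi f$ with $t\in{}^\perp\pF$ and $f\in\pF$, since its only finite subobject is $0$. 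So no noetherian induction along quotients of $x$ can rescue the sketch in your second paragraph; your observation that the noetherian condition is not self-dual is precisely the point. The torsion-free half is the dual statement and requires the dual chain condition: if $\cA$ is artinian (or a length category, as in the paper's concrete examples, e.g.\ $\cA\cong\mod kQ$), one takes the \emph{smallest} subobject $t\subseteq x$ with $x/t\in\pF$ (the family is closed under finite intersections and DCC gives a least element) and checks $t\in{}^\perp\pF$ by the argument dual to yours: a nonzero map $t\to f$ with $f\in\pF$ has kernel $k\subsetneq t$ with $x/k\in\pF\ast\pF\subseteq\pF$, contradicting minimality. In short, your first half is a correct proof of the half that holds under the noetherian hypothesis alone; the half you could not prove needs an additional (artinian-type) hypothesis, and as stated it is not provable.
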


\begin{corollary}
    Let $\cA$ be a noetherian abelian category, and $S\subseteq \cA$.
    Then $(\filt{\Gen(S)}, S^\perp)$ and $({}^\perp S, \filt{\Sub(S)})$ are torsion pairs.
\end{corollary}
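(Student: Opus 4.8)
The plan is to exhibit both pairs as instances of the torsion pairs produced by \Cref{res:polishchuk:torsion_pairs}; all the real work has already been done in the preceding lemmas, so the proof amounts to checking that the relevant classes are torsion (resp.\ torsion-free) classes and then matching up the perpendicular classes.

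First I would argue that $\langle\Gen(S)\rangle$ is a torsion class. It is extension closed (it is an extension closure, by the corollary following \Cref{res:ext_of_Sn}), and it is closed under quotients by \Cref{res:filt_fac_closed_under_fac}; these are precisely the two defining conditions. Since $\cA$ is noetherian, \Cref{res:polishchuk:torsion_pairs} applies and produces the torsion pair $(\langle\Gen(S)\rangle,\, \langle\Gen(S)\rangle^{\perp})$. It then remains to identify $\langle\Gen(S)\rangle^{\perp}$ with $S^{\perp}$: applying part (\ref{item:perp:2}) of \Cref{lem:perp:inclusions} to the subcategory $\Gen(S)$ gives $\langle\Gen(S)\rangle^{\perp} = \Gen(S)^{\perp}$, and part (\ref{item:perp:1}) of the same lemma gives $\Gen(S)^{\perp} = S^{\perp}$. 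Hence $(\langle\Gen(S)\rangle, S^{\perp})$ is a torsion pair.

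The second statement is dual. I would check that $\langle\Sub(S)\rangle$ is closed under subobjects (\Cref{res:filt_fac_closed_under_fac}) and under extensions (again it is an extension closure), so it is a torsion-free class; the noetherian hypothesis together with the second half of \Cref{res:polishchuk:torsion_pairs} then yields the torsion pair $({}^{\perp}\langle\Sub(S)\rangle,\, \langle\Sub(S)\rangle)$, and parts (\ref{item:perp:4}) and (\ref{item:perp:3}) of \Cref{lem:perp:inclusions} identify ${}^{\perp}\langle\Sub(S)\rangle$ with ${}^{\perp}\Sub(S) = {}^{\perp}S$.

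There is no substantive obstacle here; the only things to be careful about are bookkeeping points — invoking \Cref{lem:perp:inclusions} with $\Gen(S)$ (resp.\ $\Sub(S)$) rather than $S$ as input, so that the extension closure can be stripped off the perpendicular, and keeping track of which factor is the torsion part and which is the torsion-free part in the output of \Cref{res:polishchuk:torsion_pairs}.
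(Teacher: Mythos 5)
Your proposal is correct and follows the same route as the paper, which simply cites \Cref{res:polishchuk:torsion_pairs}, \Cref{res:filt_fac_closed_under_fac} and \Cref{lem:perp:inclusions}; you have merely spelled out the details (extension closure plus closure under quotients/subobjects gives a torsion/torsion-free class, then the perp identifications strip off $\langle-\rangle$ and $\Gen$/$\Sub$). No gaps.
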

\begin{proof}
    This follows directly from \Cref{res:polishchuk:torsion_pairs,res:filt_fac_closed_under_fac,lem:perp:inclusions}.
\end{proof}

\subsection{Proper Abelian Subcategories}

Given an abelian category $\cA$, it sits inside its derived category $\D^b(\cA)$ in such a way that short exact sequences in $\cA$ induce triangles in $\D^b(\cA)$, and short triangles in $\D^b(\cA)$ with objects in $\cA$ come from short exact sequences in $\cA$.
This situation can also be found many other places. Given a t-structure, the corresponding heart sits inside the associated triangulated category with this property.
Simple-minded systems is another way to construct examples with this property that are neither hearts of t-structures nor contained in a derived category. 
To formalize this property we need the following definition.

\begin{definition}[{\cite[def.~1.2]{jorgensen2022abelian}}]
    \label{def:pasc}
    Let $\cT$ be a triangulated category, and let $\cA\subseteq \cT$ be a full additive subcategory. 
    $\cA$ is called a \emph{proper abelian subcategory of $\cT$}, if it is an abelian category in such a way that $x\xmono{\alpha} y\xepi{\beta} z$ is a short exact sequence in $\cA$ if and only if there is a triangle $x\xto{\alpha} y\xto{\beta} z\to \Sigma x$ in $\cT$.
\end{definition}

In a derived category $D = \D^b(\cA)$ the standard heart $\cA$ has no negative self extensions,
i.e. given $a,a'\in\cA$ then $\Hom_D(a,\Sigma^{-n} a')=0$ for $n\geq 1$.
In general, this property is true for an arbitrary heart in a triangulated category (see \cite[lem.~3.1]{holm2013sparseness}).
This is a very useful property of hearts, but it does not hold true for all proper abelian subcategories.
Therefore we need the following definition.

\begin{definition}
    Let $\cA\subseteq\cT$ be a proper abelian subcategory of a triangulated category $\cT$.
    For $n\in\NN$, we say that $\cA$ satisfies $E_n$ if $\Hom_\cA(a,\Sigma^{-i}a')=0$ for all $a,a'\in\cA$ and $1\leq i\leq n$.
\end{definition}

Given two equivalent triangulated categories we can move proper abelian subcategories between them.

\begin{proposition}
    \label{res:pullback-abelian-category}
    Let $\cT,\cD$ be triangulated categories, and $F:\cT\rightarrow \cD$ an equivalence of triangulated categories.
    Given a proper abelian subcategory $\cB\subseteq \cD$, then $F^{-1}(\cB)\subseteq \cT$ is a proper abelian subcategory.
\end{proposition}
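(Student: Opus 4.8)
The plan is to transport the abelian structure of $\cB$ to $\cA\coloneqq F^{-1}(\cB)$ along $F$, and then verify the compatibility condition of \Cref{def:pasc} by pushing short exact sequences and triangles across $F$ and a quasi-inverse.

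First I would fix a quasi-inverse $G\colon\cD\to\cT$ of $F$; since $F$ is a triangulated equivalence, $G$ is again a triangulated functor, and we have the unit/counit isomorphisms $\eta\colon\id_\cT\simto GF$ and $\varepsilon\colon FG\simto\id_\cD$ together with the structural suspension isomorphisms $F\Sigma\simto\Sigma F$ and $G\Sigma\simto\Sigma G$. Let $\cA\subseteq\cT$ be the full subcategory of objects $x$ with $Fx\in\cB$. The restriction $F|_\cA\colon\cA\to\cB$ is fully faithful, being a restriction of the fully faithful $F$, and it is essentially surjective, since for $b\in\cB$ the object $Gb$ lies in $\cA$ (because $FGb\cong b\in\cB$) and $F(Gb)\cong b$; hence $F|_\cA$ is an equivalence. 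Because $F$ and $\cB$ are additive, $\cA$ is closed under finite biproducts in $\cT$ and contains a zero object, so it is a full additive subcategory. Transporting the abelian structure of $\cB$ along the equivalence $F|_\cA$ then makes $\cA$ an abelian category in which, by construction, a sequence $x\xmono{\alpha}y\xepi{\beta}z$ is short exact if and only if $Fx\xmono{F\alpha}Fy\xepi{F\beta}Fz$ is short exact in $\cB$.

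It then remains to check the defining property of \Cref{def:pasc} for $\cA\subseteq\cT$. For the forward implication, if $x\xmono{\alpha}y\xepi{\beta}z$ is short exact in $\cA$, then $Fx\xmono{F\alpha}Fy\xepi{F\beta}Fz$ is short exact in $\cB$, so, as $\cB$ is a proper abelian subcategory of $\cD$, there is a triangle $Fx\xto{F\alpha}Fy\xto{F\beta}Fz\to\Sigma Fx$ in $\cD$; applying the triangulated functor $G$ and composing with $\eta$ and the suspension isomorphism yields, after identifying $GF\alpha$ with $\alpha$ and $GF\beta$ with $\beta$, a triangle $x\xto{\alpha}y\xto{\beta}z\to\Sigma x$ in $\cT$. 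For the reverse implication, given a triangle $x\xto{\alpha}y\xto{\beta}z\to\Sigma x$ in $\cT$ with $x,y,z\in\cA$, apply the triangulated functor $F$ to obtain a triangle $Fx\xto{F\alpha}Fy\xto{F\beta}Fz\to\Sigma Fx$ in $\cD$ whose three objects lie in $\cB$; since $\cB$ is a proper abelian subcategory this forces $Fx\xmono{F\alpha}Fy\xepi{F\beta}Fz$ to be short exact in $\cB$, and hence $x\xmono{\alpha}y\xepi{\beta}z$ is short exact in $\cA$ by the way the structure was transported.

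The only slightly delicate point is the bookkeeping in the last paragraph: one has to use that a quasi-inverse of a triangulated equivalence is itself triangulated and that the class of triangles is closed under isomorphism, so that $F$ both preserves triangles and, via $G$, reflects them up to the coherence isomorphism $F\Sigma\cong\Sigma F$. Once this is set up, both implications are essentially immediate, and the remaining verifications — that $\cA$ is a full additive subcategory and that the transported structure is abelian with the stated short exact sequences — are routine.
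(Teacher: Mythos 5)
Your proof is correct, and it is exactly the routine transport-of-structure argument the paper has in mind when it omits the proof as "straightforward to check": restrict $F$ to an equivalence $F^{-1}(\cB)\to\cB$, transport the abelian structure, and move short exact sequences and triangles back and forth using a triangulated quasi-inverse, the unit/counit isomorphisms, and closure of triangles under isomorphism. The only point worth noting is the standing convention that $\cB$ (like any proper abelian subcategory here) is taken to be closed under isomorphisms, which is what makes $Gb\in F^{-1}(\cB)$ and the additivity of $F^{-1}(\cB)$ literally true; with that convention your argument is complete.
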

\begin{proof}
    This is straightforward to check, thus we omit the proof.
\end{proof}

\section{Filtrations of torsion classes}

The following setup will be assumed throughout this section.
\begin{setup}
    \label{setup:twotilt}
    Let $\cT$ be a triangulated category.
    Let $\cA, \cB\subseteq \cT$ be proper abelian subcategories that satisfy $E_5$,
    such that $\cA\subseteq \cB\ast \Sigma^{-1}\cB \ast \Sigma^{-2}\cB$ and $\cB\subseteq \Sigma^2\cA\ast \Sigma\cA \ast \cA$.
    Furthermore, assume that $\cA$ is noetherian.
    Define
    \begin{enumerate}
        \item[$\bullet$] $\pE_0 = \filt{\Gen_\cA (\cA \cap  \cB)}_\cA$,
        \item[$\bullet$] $\pE_1 = \cA \cap \Sigma^{-1} \cB$,
        \item[$\bullet$] $\pE_2 = \filt{\Sub_\cA (\cA \cap \Sigma^{-2} \cB)}_\cA$.
    \end{enumerate}
\end{setup}

\begin{lemma}
    \label{lem:e0_perp}
    $\pE_0^{\perp_{\cA}} = (\cA \cap  \cB)^{\perp_{\cA}} = \cA\cap (\Sigma^{-1}\cB \ast \Sigma^{-2}\cB)$.
\end{lemma}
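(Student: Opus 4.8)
The statement is a chain of two equalities, and I would treat them separately.

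The first, $\pE_0^{\perp_{\cA}}=(\cA\cap\cB)^{\perp_{\cA}}$, is purely formal: since $\pE_0=\filt{\Gen_\cA(\cA\cap\cB)}_\cA$, \Cref{lem:perp:inclusions}(\ref{item:perp:2}) applied to the subcategory $\Gen_\cA(\cA\cap\cB)$ gives $\pE_0^{\perp_{\cA}}=\Gen_\cA(\cA\cap\cB)^{\perp_{\cA}}$, and \Cref{lem:perp:inclusions}(\ref{item:perp:1}) applied to $\cA\cap\cB$ gives $\Gen_\cA(\cA\cap\cB)^{\perp_{\cA}}=(\cA\cap\cB)^{\perp_{\cA}}$. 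For the easy inclusion $\cA\cap(\Sigma^{-1}\cB\ast\Sigma^{-2}\cB)\subseteq(\cA\cap\cB)^{\perp_{\cA}}$ of the second equality: given such an $x$, fix a triangle $\Sigma^{-1}b_1\to x\xrightarrow{h}\Sigma^{-2}b_2\to b_1$ with $b_1,b_2\in\cB$, and let $a\in\cA\cap\cB$ with $f\colon a\to x$. Then $h\comp f\in\cT(a,\Sigma^{-2}b_2)$ vanishes since $a,b_2\in\cB$ and $\cB$ satisfies $E_5$, so $f$ factors through $\Sigma^{-1}b_1\to x$, and that lift lies in $\cT(a,\Sigma^{-1}b_1)=0$, again by $E_5$ for $\cB$; hence $f=0$.

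For the reverse inclusion, let $x\in\cA$ with $\Hom_\cA(\cA\cap\cB,x)=0$. Since $\cT(\cB,\Sigma^{-1}\cB\ast\Sigma^{-2}\cB)=0$ (a consequence of $E_5$ for $\cB$), the inclusion $\cA\subseteq\cB\ast\Sigma^{-1}\cB\ast\Sigma^{-2}\cB$ gives a triangle $b\xrightarrow{\varphi}x\to w\to\Sigma b$ with $b\in\cB$ and $w\in\Sigma^{-1}\cB\ast\Sigma^{-2}\cB$; it is enough to show $b=0$, for then $x\cong w$. My plan is to reduce this to a statement internal to $\cB$: applying $\cT(a,-)$ for $a\in\cA\cap\cB\subseteq\cB$ to this triangle, and using $\cT(\cB,\Sigma^{-i}\cB)=0$ for $1\le i\le 3$, shows $\cT(a,w)=\cT(a,\Sigma^{-1}w)=0$, so $\varphi_*\colon\cT(a,b)\xrightarrow{\ \sim\ }\cT(a,x)$. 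Thus the hypothesis $\Hom_\cA(\cA\cap\cB,x)=0$ is equivalent to $\Hom_\cB(\cA\cap\cB,b)=0$, i.e.\ $b\in(\cA\cap\cB)^{\perp_{\cB}}$, and the whole inclusion comes down to showing $(\cA\cap\cB)^{\perp_{\cB}}=0$ — equivalently, that $\cA\cap\cB$ is a generating subcategory of $\cB$.

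Proving $(\cA\cap\cB)^{\perp_{\cB}}=0$ is the main obstacle, and it is the step that genuinely uses the second structural inclusion $\cB\subseteq\Sigma^2\cA\ast\Sigma\cA\ast\cA$ and the full range $1\le i\le 5$ of $E_5$ (rather than just $E_2$). The route I would try: given $0\neq b\in\cB$, decompose $b$ according to $\cB\subseteq\Sigma^2\cA\ast\Sigma\cA\ast\cA$ into "$\cA$-pieces" in $\cA$-degrees $0,-1,-2$, pass to the lowest nonzero piece, re-express the relevant $\cA$-objects through $\cA\subseteq\cB\ast\Sigma^{-1}\cB\ast\Sigma^{-2}\cB$, and cancel the off-degree contributions using the mixed vanishing groups $\cT(\cA,\Sigma^{-i}\cB)$ and $\cT(\cB,\Sigma^{-i}\cA)$ (both of which vanish in the relevant range as a consequence of the two structural inclusions together with $E_5$), thereby producing an object of $\cA\cap\cB$ admitting a nonzero morphism to $b$. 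Carrying the two "degree" filtrations through the octahedral axiom and checking that the numerical bounds stay within the window guaranteed by $E_5$ is where the real work lies; everything else is the formal manipulation above. (In the motivating situation of \cite{jensen2013filtrations}, where $\cB=\mod(\End(T)^{\op})$, one has $T\in\cA\cap\cB$ and $T$ is a projective generator of $\cB$, so $(\cA\cap\cB)^{\perp_{\cB}}=0$ is transparent there.)
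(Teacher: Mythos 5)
Your first equality, the easy inclusion, and the observation that $\varphi_*\colon\cT(a,b)\to\cT(a,x)$ is an isomorphism for all $a\in\cA\cap\cB$ (using $E_3$ for $\cB$) are all correct. The genuine gap is exactly the step you flag as the main obstacle: the claim $(\cA\cap\cB)^{\perp_{\cB}}=0$ is not proved (you only sketch a route), and in fact it is \emph{not a consequence} of \Cref{setup:twotilt}. For a counterexample take $\cB=\Sigma\cA$ for any noetherian heart $\cA$, e.g.\ $\cA=\mod k$ inside $\cT=\D^b(k)$: both $\cB\subseteq\Sigma^2\cA\ast\Sigma\cA\ast\cA$ and $\cA\subseteq\cB\ast\Sigma^{-1}\cB\ast\Sigma^{-2}\cB$ hold trivially, $E_5$ holds for both, yet $\cA\cap\cB=0$, so $(\cA\cap\cB)^{\perp_{\cB}}=\cB\neq 0$, while the lemma itself is (trivially) true in this case. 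So your reduction replaces the lemma by a strictly stronger ``$\cA\cap\cB$ generates $\cB$'' statement that can fail under the stated hypotheses; consequently the proposed two-filtration/octahedron argument cannot be completed in general, and the aside about $T$ being a projective generator only works in the Jensen--Madsen--Su situation.

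What the hypotheses do give --- and this is how the paper argues --- is a \emph{local} statement about the particular $b$ in your triangle $b\xrightarrow{\varphi}x\to w\to\Sigma b$: decompose $b$ using $\cB\subseteq\Sigma^2\cA\ast\Sigma\cA\ast\cA$ as $z\xrightarrow{g}b\to\tilde a\to\Sigma z$ with $\tilde a\in\cA$ and $z\in\Sigma^2\cA\ast\Sigma\cA$. Then $\varphi g\in\cT(z,x)=0$ since $\cA$ satisfies $E_2$, so $g$ factors through $\Sigma^{-1}w\in\Sigma^{-2}\cB\ast\Sigma^{-3}\cB\subseteq(\cA\ast\Sigma^{-1}\cA\ast\Sigma^{-2}\cA)\ast(\Sigma^{-1}\cA\ast\Sigma^{-2}\cA\ast\Sigma^{-3}\cA)$, and $\cT(z,\Sigma^{-1}w)=0$ by $E_5$ for $\cA$; hence $g=0$, so $b$ is a direct summand of $\tilde a$ and therefore $b\in\cA\cap\cB$. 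Now the hypothesis on $x$ gives $\varphi=0$ directly (equivalently, via your isomorphism, $b=0$ because $\cT(b,b)\cong\cT(b,x)=0$), so $x$ is a direct summand of $w$, and $x\in\Sigma^{-1}\cB\ast\Sigma^{-2}\cB$ because this class is closed under summands, since $\cT(\Sigma^{-1}\cB,\Sigma^{-2}\cB)=0$ and \cite[prop.~2.1(1)]{iyama2008mutation}. The missing idea, then, is this use of the second structural inclusion on the chosen $b$ itself rather than a global generation statement for $\cB$.
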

\begin{proof}
    \underline{First equality}: This follows directly from \Cref{lem:perp:inclusions}.

    \medbreak
    \underline{Second equality}:
    That $(\cA \cap  \cB)^{\perp_{\cA}} \supseteq \cA\cap (\Sigma^{-1}\cB \ast \Sigma^{-2}\cB)$ follows directly from the fact that $\cB$ satisfies $E_2$.
    For the other inclusion let $a\in (\cA \cap  \cB)^{\perp_{\cA}}$.
    Since $\cA\subseteq \cB\ast \Sigma^{-1}\cB \ast \Sigma^{-2}\cB$, there exists a triangle
    \begin{equation*}
    \begin{tikzcd}
        \Sigma^{-1}x \rar & b\rar["f"] & a \rar& x,
    \end{tikzcd}
    \end{equation*}

    with $b\in\cB$ and $x\in \Sigma^{-1}\cB \ast \Sigma^{-2}\cB$.
    We want to show that $f=0$, which would give that $a$ is a direct summand of $x$,
    and thus $a\in\Sigma^{-1}\cB\ast\Sigma^{-2}\cB$ by \cite[prop. 2.1(1)]{iyama2008mutation}, implying the inclusion we are seeking.
    Since $b\in\cB\subseteq \Sigma^2\cA\ast \Sigma\cA \ast \cA$ there exists a triangle
    \begin{equation*}
    \begin{tikzcd}
        z \rar["g"] & b \rar & \tilde a \rar& \Sigma z,
    \end{tikzcd}
    \end{equation*}
     
    where $\tilde a\in\cA$ and $z \in \Sigma^2\cA\ast\Sigma\cA$. This gives the combined diagram
    \begin{equation*}
    \begin{tikzcd}
        &z\dar["g"]\ar[dl, dashed]\\
        \Sigma^{-1}x \rar & b\rar["f"]\dar & a \rar& x.\\
                          &\tilde a
    \end{tikzcd}
    \end{equation*}
    
    Since $\cA$ satisfies $E_2$ we get that $fg = 0$, and therefore $g$ factors through $\Sigma^{-1} x$, but
    \begin{equation*}
        \Sigma^{-1} x\in \Sigma^{-2} \cB\ast\Sigma^{-3}\cB\subseteq (\cA\ast \Sigma^{-1}\cA\ast\Sigma^{-2}\cA)\ast(\Sigma^{-1}\cA\ast\Sigma^{-2}\cA\ast\Sigma^{-3}\cA).
    \end{equation*}

    Using that $\cA$ satisfies $E_5$ we get that $\Hom(z,\Sigma^{-1} x) = 0$, and therefore $g=0$.
    Thus $b$ is a direct summand of $\tilde a$, in particular $b\in\cA$.
    By assumption $a\in (\cA \cap  \cB)^{\perp_{\cA}}$ meaning that $f=0$, giving the result we want.
\end{proof}

\begin{lemma}
    \label{lem:perp_e2}
    $^{\perp_{\cA}}\pE_2 = {}^{\perp_{\cA}}(\cA\cap\Sigma^{-2}\cB) = \cA\cap(\cB\ast \Sigma^{-1}\cB)$.
\end{lemma}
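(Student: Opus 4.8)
The statement is the $\perp$-dual of \Cref{lem:e0_perp}, and the plan is to prove it by mirroring that argument: interchange subobjects with quotients, and interchange the roles of the decompositions $\cA\subseteq\cB\ast\Sigma^{-1}\cB\ast\Sigma^{-2}\cB$ and $\cB\subseteq\Sigma^2\cA\ast\Sigma\cA\ast\cA$. The first equality ${}^{\perp_\cA}\pE_2={}^{\perp_\cA}(\cA\cap\Sigma^{-2}\cB)$ follows from two applications of \Cref{lem:perp:inclusions} (first remove the extension closure, then remove $\Sub_\cA$). For the inclusion ${}^{\perp_\cA}(\cA\cap\Sigma^{-2}\cB)\supseteq\cA\cap(\cB\ast\Sigma^{-1}\cB)$, I would take $a\in\cA\cap(\cB\ast\Sigma^{-1}\cB)$ with a defining triangle $b\to a\to\Sigma^{-1}b'\to\Sigma b$ where $b,b'\in\cB$, fix $c\in\cA\cap\Sigma^{-2}\cB$, and apply $\Hom(-,c)$; the outer terms $\Hom(b,c)$ and $\Hom(\Sigma^{-1}b',c)\cong\Hom(b',\Sigma c)$ vanish because $\cB$ satisfies $E_2$ (note $c\in\Sigma^{-2}\cB$ and $\Sigma c\in\Sigma^{-1}\cB$), so $\Hom(a,c)=0$.

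The content is the reverse inclusion. Given $a\in{}^{\perp_\cA}(\cA\cap\Sigma^{-2}\cB)$, using $\cA\subseteq(\cB\ast\Sigma^{-1}\cB)\ast\Sigma^{-2}\cB$ I would fix a triangle
\[ y\to a\xto{f}\Sigma^{-2}c\to\Sigma y,\qquad y\in\cB\ast\Sigma^{-1}\cB,\ c\in\cB. \]
It is enough to prove $f=0$: then $a$ is a direct summand of $y$, hence $a\in\cB\ast\Sigma^{-1}\cB$ by \cite[prop.~2.1(1)]{iyama2008mutation}, and $a\in\cA$ by hypothesis. To obtain $f=0$ I would first show $\Sigma^{-2}c\in\cA$. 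Writing $c\in\cB\subseteq\Sigma^2\cA\ast(\Sigma\cA\ast\cA)$ and applying $\Sigma^{-2}$ yields a triangle $\tilde a\to\Sigma^{-2}c\xto{g}v\to\Sigma\tilde a$ with $\tilde a\in\cA$ and $v\in\Sigma^{-1}\cA\ast\Sigma^{-2}\cA$. Since $\cA$ satisfies $E_2$ we get $\Hom(a,v)=0$, so $gf=0$, and hence $g$ factors through the cofibre $\Sigma y$ of $f$. But $\Sigma y\in\Sigma\cB\ast\cB\subseteq(\Sigma^3\cA\ast\Sigma^2\cA\ast\Sigma\cA)\ast(\Sigma^2\cA\ast\Sigma\cA\ast\cA)$ while $v\in\Sigma^{-1}\cA\ast\Sigma^{-2}\cA$, and since $\cA$ satisfies $E_5$ every $\Hom$ that could contribute vanishes, so $\Hom(\Sigma y,v)=0$ and $g=0$. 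The triangle $\tilde a\to\Sigma^{-2}c\xto{0}v\to\Sigma\tilde a$ then exhibits $\Sigma^{-2}c$ as a direct summand of $\tilde a\in\cA$, so $\Sigma^{-2}c\in\cA\cap\Sigma^{-2}\cB$; as $a\in{}^{\perp_\cA}(\cA\cap\Sigma^{-2}\cB)$, this forces $f=0$.

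I expect the main obstacle to be the shift bookkeeping in the step $\Hom(\Sigma y,v)=0$: one must expand $\Sigma y$ and $v$ into iterated $\ast$-products of shifts $\Sigma^i\cA$ and check that every contributing pair $(\Sigma^i\cA,\Sigma^j\cA)$ has $i-j\in\{2,3,4,5\}$ — this is precisely why $E_5$ (rather than something weaker) is assumed. Everything else is routine: the reductions $\Hom(\pX\ast\pY,\pZ)=0$ (and its dual) whenever the two pieces are $\Hom$-orthogonal to the target, the fact that a $\ast$-product of summand-closed subcategories is summand-closed together with the closure of a proper abelian subcategory under direct summands in $\cT$, and the observation that a triangle whose middle map vanishes splits off its middle term as a direct summand.
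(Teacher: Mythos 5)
Your proof is correct and is essentially the paper's own argument: the paper omits the proof as being dual to \Cref{lem:e0_perp}, and your argument is precisely that dualization (the triangle $y\to a\xto{f}\Sigma^{-2}c\to\Sigma y$, showing $\Sigma^{-2}c\in\cA\cap\Sigma^{-2}\cB$ via the $E_5$ vanishing, then $f=0$ and summand-closedness of $\cB\ast\Sigma^{-1}\cB$ from \cite[prop.~2.1(1)]{iyama2008mutation}, which needs the Hom-vanishing $\Hom(\cB,\Sigma^{-1}\cB)=0$ rather than summand-closedness of the factors). One cosmetic slip: the contributing shift differences are $i-j\in\{1,2,3,4,5\}$, not $\{2,3,4,5\}$, but all of these vanish under $E_5$, so the conclusion stands.
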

\begin{proof}
    The proof of this is very similar to that of \Cref{lem:e0_perp} and is therefore omitted.
\end{proof}

\begin{corollary}
    \label{res:torsion_pairs}
    There are torsion pairs
    \begin{enumerate}
        \item[$\bullet$] $(\pE_0,\cA\cap (\Sigma^{-1}\cB\ast \Sigma^{-2}\cB))$,
        \item[$\bullet$] $(\cA\cap (\cB\ast \Sigma^{-1}\cB) , \pE_2)$.
    \end{enumerate}
\end{corollary}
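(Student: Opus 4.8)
The plan is to recognize this corollary as a direct packaging of \Cref{lem:e0_perp} and \Cref{lem:perp_e2} together with Polishchuk's theorem \Cref{res:polishchuk:torsion_pairs}, which is available because $\cA$ is assumed noetherian in \Cref{setup:twotilt}. Concretely, I would first check that $\pE_0$ is a torsion class and $\pE_2$ a torsion-free class, then invoke \Cref{res:polishchuk:torsion_pairs} to complete each of them to a torsion pair, and finally identify the complementary class using the two lemmas just proved.

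For the first pair: by definition $\pE_0 = \filt{\Gen_\cA(\cA\cap\cB)}_\cA$, which is extension closed (being an extension closure) and closed under quotients by \Cref{res:filt_fac_closed_under_fac}; hence it is a torsion class. Then \Cref{res:polishchuk:torsion_pairs} gives that $(\pE_0,\pE_0^{\perp_\cA})$ is a torsion pair in $\cA$, and \Cref{lem:e0_perp} rewrites $\pE_0^{\perp_\cA} = \cA\cap(\Sigma^{-1}\cB\ast\Sigma^{-2}\cB)$, yielding the first claim.

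For the second pair: dually, $\pE_2 = \filt{\Sub_\cA(\cA\cap\Sigma^{-2}\cB)}_\cA$ is extension closed and closed under subobjects by \Cref{res:filt_fac_closed_under_fac}, so it is a torsion-free class; \Cref{res:polishchuk:torsion_pairs} then produces the torsion pair $({}^{\perp_\cA}\pE_2,\pE_2)$, and \Cref{lem:perp_e2} rewrites ${}^{\perp_\cA}\pE_2 = \cA\cap(\cB\ast\Sigma^{-1}\cB)$, yielding the second claim.

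There is no genuine obstacle here, since all the real content has already been absorbed into \Cref{lem:e0_perp} and \Cref{lem:perp_e2}; this corollary is essentially a bookkeeping step. The only points requiring a little care are that the torsion-theoretic operations $\ast$, $(-)^{\perp}$, $\Gen$ and $\Sub$ appearing in $\pE_0$ and $\pE_2$ are throughout taken inside the abelian category $\cA$ rather than in $\cT$, and that the noetherianity hypothesis needed to apply \Cref{res:polishchuk:torsion_pairs} is exactly the one provided by \Cref{setup:twotilt}.
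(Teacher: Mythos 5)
Your proposal is correct and follows exactly the paper's route: the paper also deduces this corollary directly from \Cref{lem:e0_perp}, \Cref{lem:perp_e2} and \Cref{res:polishchuk:torsion_pairs}, with your explicit verification that $\pE_0$ is a torsion class and $\pE_2$ a torsion-free class (via \Cref{res:filt_fac_closed_under_fac}) merely spelling out what the paper leaves implicit.
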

\begin{proof}
    This follows directly from \Cref{lem:e0_perp,lem:perp_e2} and \Cref{res:polishchuk:torsion_pairs}.
\end{proof}

\begin{lemma}
    There is a filtration of torsion classes $0\subseteq \pE_0 \subseteq \cA\cap(\cB\ast\Sigma^{-1}\cB)\subseteq \cA$.
\end{lemma}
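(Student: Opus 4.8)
The plan is to check the two ingredients of the statement separately: that each of $0$, $\pE_0$, $\cA\cap(\cB\ast\Sigma^{-1}\cB)$ and $\cA$ is a torsion class in $\cA$, and that the displayed inclusions hold, the only non-formal one being $\pE_0\subseteq\cA\cap(\cB\ast\Sigma^{-1}\cB)$.

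For the torsion-class claims: the zero subcategory and $\cA$ itself are trivially closed under quotients and extensions. For $\pE_0=\filt{\Gen_\cA(\cA\cap\cB)}_\cA$ one notes it is closed under quotients by \Cref{res:filt_fac_closed_under_fac} and extension-closed because it is an extension closure (the corollary following \Cref{res:ext_of_Sn}); hence $\pE_0$ is a torsion class. For $\cA\cap(\cB\ast\Sigma^{-1}\cB)$ one invokes \Cref{res:torsion_pairs}: the pair $\bigl(\cA\cap(\cB\ast\Sigma^{-1}\cB),\pE_2\bigr)$ is a torsion pair in $\cA$, and the torsion part of a torsion pair is always a torsion class, so this subcategory is closed under quotients and under extensions.

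For the inclusions, $0\subseteq\pE_0$ and $\cA\cap(\cB\ast\Sigma^{-1}\cB)\subseteq\cA$ are immediate. To get $\pE_0\subseteq\cA\cap(\cB\ast\Sigma^{-1}\cB)$ I would argue from the generators outward: if $a\in\cA\cap\cB$, the split triangle $a\xrightarrow{\id}a\to 0\to\Sigma a$, with $0\in\Sigma^{-1}\cB$, witnesses $a\in\cB\ast\Sigma^{-1}\cB$, so $\cA\cap\cB\subseteq\cA\cap(\cB\ast\Sigma^{-1}\cB)$. Since $\cA\cap(\cB\ast\Sigma^{-1}\cB)$ is a torsion class it is closed under quotients, hence $\Gen_\cA(\cA\cap\cB)\subseteq\cA\cap(\cB\ast\Sigma^{-1}\cB)$; being also closed under extensions, it contains the whole extension closure, i.e. $\pE_0=\filt{\Gen_\cA(\cA\cap\cB)}_\cA\subseteq\cA\cap(\cB\ast\Sigma^{-1}\cB)$. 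This closes the chain.

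I do not expect a real obstacle here: the substantive input, namely that $\cA\cap(\cB\ast\Sigma^{-1}\cB)$ is a torsion class, has already been supplied by \Cref{res:torsion_pairs} (which rests on the $E_5$ and $\ast$-hypotheses of \Cref{setup:twotilt} via \Cref{lem:perp_e2}), and everything else is bookkeeping. The one point to state carefully is that $\cB\subseteq\cB\ast\Sigma^{-1}\cB$ via trivial triangles, since that is exactly what places the generators of $\pE_0$ inside the larger torsion class.
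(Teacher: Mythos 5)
Your proof is correct and follows essentially the same route as the paper: both reduce to the inclusion $\pE_0\subseteq\cA\cap(\cB\ast\Sigma^{-1}\cB)$, note that $\cA\cap\cB\subseteq\cA\cap(\cB\ast\Sigma^{-1}\cB)$, and then use that the latter is a torsion class (via \Cref{res:torsion_pairs}) to absorb $\Gen_\cA(\cA\cap\cB)$ and its extension closure. Your extra details (the trivial triangle witnessing $\cB\subseteq\cB\ast\Sigma^{-1}\cB$, and the checks that each term of the chain is itself a torsion class) are fine but only make explicit what the paper leaves implicit.
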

\begin{proof}
        The only inclusion that it is necessary to check is the second inclusion.
        Notice that $\cA \cap  \cB \subseteq \cA\cap (\cB\ast \Sigma^{-1}\cB)$.
        By \Cref{res:torsion_pairs} we get that $\cA\cap (\cB\ast \Sigma^{-1}\cB)$ is a torsion class, and is therefore closed under taking quotients and extensions.
        Thus it follows directly that $\pE_0 = \filt{\Gen_\cA (\cA \cap  \cB)}_\cA \subseteq \cA\cap(\cB\ast\Sigma^{-1}\cB)$.
\end{proof}

\begin{corollary}
    \label{res:torsion_filtation}
    Let $x\in\cA$, then up to isomorphism there exists a unique filtration of subobjects 
    $0 = x_0 \subseteq x_1 \subseteq x_2 \subseteq x_3 = x$ such that each quotient $x_{i+1}/x_i = \cok(x_i\mono x_{i+1})\in \pE_i$.
\end{corollary}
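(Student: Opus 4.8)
The plan is to push $x$ through the two torsion pairs of \Cref{res:torsion_pairs}, in the order dictated by the filtration of torsion classes $0\subseteq\pE_0\subseteq\cA\cap(\cB\ast\Sigma^{-1}\cB)\subseteq\cA$, and then to recognise the resulting middle graded piece as $\pE_1$. Write $\pT=\cA\cap(\cB\ast\Sigma^{-1}\cB)$ and $\pF=\cA\cap(\Sigma^{-1}\cB\ast\Sigma^{-2}\cB)$, so that $(\pE_0,\pF)$ and $(\pT,\pE_2)$ are the torsion pairs in play and $\pE_0\subseteq\pT$ (shown above).

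I would first establish the identity $\pE_1=\pT\cap\pF$. The inclusion $\pE_1\subseteq\pT\cap\pF$ is immediate, since $0\in\cB$ gives both $\Sigma^{-1}\cB\subseteq\cB\ast\Sigma^{-1}\cB$ and $\Sigma^{-1}\cB\subseteq\Sigma^{-1}\cB\ast\Sigma^{-2}\cB$. For the reverse inclusion, take $y\in\pT\cap\pF$ and choose triangles $b_0\xto{\iota}y\xto{\pi}\Sigma^{-1}b_1\to\Sigma b_0$ and $\Sigma^{-1}b_2\xto{j}y\xto{p}\Sigma^{-2}b_3\to b_2$ with all $b_i\in\cB$. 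The composite $p\iota\colon b_0\to\Sigma^{-2}b_3$ vanishes, since $\Hom_\cT(b_0,\Sigma^{-2}b_3)=0$ as $\cB$ satisfies $E_5$ (hence $E_2$); so $\iota$ factors through $j$, say $\iota=j\varphi$. But $\varphi\colon b_0\to\Sigma^{-1}b_2$ lies in $\Hom_\cT(b_0,\Sigma^{-1}b_2)=0$ since $\cB$ satisfies $E_1$, hence $\iota=0$. The first triangle then splits, exhibiting $y$ as a direct summand of $\Sigma^{-1}b_1\in\Sigma^{-1}\cB$; since $\cB$ is idempotent complete (being abelian) and full in $\cT$ it is closed under direct summands in $\cT$, and so is $\Sigma^{-1}\cB$, whence $y\in\cA\cap\Sigma^{-1}\cB=\pE_1$.

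Next I would prove existence. Given $x\in\cA$, the torsion pair $(\pT,\pE_2)$ yields a short exact sequence $x_2\mono x\epi x/x_2$ with $x_2\in\pT$ and $x/x_2\in\pE_2$; put $x_3=x$. Applying $(\pE_0,\pF)$ to $x_2$ yields $x_1\mono x_2\epi x_2/x_1$ with $x_1\in\pE_0$ and $x_2/x_1\in\pF$; put $x_0=0$. Then $x_1/x_0=x_1\in\pE_0$ and $x_3/x_2\in\pE_2$, and $x_2/x_1$ is a quotient of $x_2\in\pT$, so $x_2/x_1\in\pT$ (torsion classes are quotient-closed), giving $x_2/x_1\in\pT\cap\pF=\pE_1$. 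For uniqueness, let $0=x_0\subseteq x_1\subseteq x_2\subseteq x_3=x$ be any filtration with $x_{i+1}/x_i\in\pE_i$. From $\pE_0\subseteq\pT$, $\pE_1\subseteq\pT$ and closure of $\pT$ under extensions, the sequence $x_1\mono x_2\epi x_2/x_1$ forces $x_2\in\pT$; together with $x/x_2=x_3/x_2\in\pE_2$ this makes $x_2\mono x\epi x/x_2$ a torsion sequence for $(\pT,\pE_2)$, so $x_2$ is the (unique) torsion subobject of $x$. Likewise $x_1\in\pE_0$ and $x_2/x_1\in\pE_1\subseteq\pF$ make $x_1\mono x_2\epi x_2/x_1$ the torsion sequence for $(\pE_0,\pF)$ on $x_2$, so $x_1$ is determined, and $x_0=0$, $x_3=x$ are forced; thus the chain of subobjects is unique.

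The hard part will be the identity $\pE_1=\pT\cap\pF$: recognising the middle graded piece is exactly what needs the triangle chase above together with the $E_n$-vanishing for $\cB$. Everything else is formal, resting on \Cref{res:polishchuk:torsion_pairs} (where noetherianness of $\cA$ enters), uniqueness of torsion subobjects, and closure of torsion classes under quotients and extensions. One small point to watch: "unique up to isomorphism" should be read as saying the chain of subobjects $x_i\subseteq x$ is literally determined, which is exactly what the torsion-subobject argument delivers.
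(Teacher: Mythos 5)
Your proof is correct and follows essentially the same route as the paper: apply the torsion pair $(\cA\cap(\cB\ast\Sigma^{-1}\cB),\pE_2)$ to $x$, then $(\pE_0,\cA\cap(\Sigma^{-1}\cB\ast\Sigma^{-2}\cB))$ to the torsion subobject, and recognise the middle quotient as lying in $\pE_1$ because the torsion class is quotient-closed. The only real difference is that where the paper simply cites \cite[lem.~2.2(ii)]{jorgensen2021proper} for the identity $\cA\cap(\cB\ast\Sigma^{-1}\cB)\cap(\Sigma^{-1}\cB\ast\Sigma^{-2}\cB)=\cA\cap\Sigma^{-1}\cB$, you prove it yourself by a triangle chase using the $E_1$/$E_2$ vanishing for $\cB$ and closure of $\Sigma^{-1}\cB$ under direct summands, and you likewise spell out the uniqueness via the uniqueness of torsion subobjects; both additions are sound and just make the argument more self-contained.
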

\begin{proof}
     By \Cref{res:torsion_pairs} we get that $(\cA\cap (\cB\ast \Sigma^{-1}\cB) , \pE_2)$ is a torsion pair in $\cA$.
     Thus there exists a short exact sequence
     \begin{equation*}
     \begin{tikzcd}
         x_2 \rar[rightarrowtail] & x \rar[twoheadrightarrow]& e_2
     \end{tikzcd}
     \end{equation*}

     with $x_2\in\cA\cap (\cB\ast \Sigma^{-1}\cB)$ and $e_2\in\pE_2$.
     \Cref{res:torsion_pairs} also says that there is a torsion pair $(\pE_0,A\cap (\Sigma^{-1}\cB\ast \Sigma^{-2}\cB))$.
     Therefore, there exists a short exact sequence

     \begin{equation*}
     \begin{tikzcd}
         x_1 \rar[rightarrowtail] & x_2 \rar[twoheadrightarrow]& e_1
     \end{tikzcd}
     \end{equation*}

     with $x_1\in\pE_0$ and $e_1\in\cA\cap(\Sigma^{-1}\cB\ast \Sigma^{-2}\cB)$.
     Notice that since $\cA\cap (\cB\ast \Sigma^{-1}\cB)$ is a torsion class we get that 
     $e_1\in \cA\cap (\cB\ast \Sigma^{-1}\cB)\cap(\Sigma^{-1}\cB\ast \Sigma^{-2}\cB)=\cA\cap \Sigma^{-1}\cB = \pE_1$ by \cite[lem.~2.2(ii)]{jorgensen2021proper}.
     
     \indent The uniqueness follows directly from the uniqueness of the short exact sequences corresponding to torsion pairs.
\end{proof}

\begin{definition}
    A tuple $(\pS_0, \pS_1, \pS_2)$ of full subcategories in an abelian category $\pA$
    is called a \emph{torsion triple} if
    \begin{enumerate}
        \item $\Hom_{\pA}(\pS_i,\pS_j) = 0$ for $i<j$,
        \item $\pA = \pS_0 \ast \pS_1 \ast \pS_2$.
    \end{enumerate}
\end{definition}

\begin{corollary}
    $(\pE_0,\pE_1,\pE_2)$ is a torsion triple.
\end{corollary}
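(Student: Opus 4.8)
The plan is to verify the two defining properties of a torsion triple for $(\pE_0, \pE_1, \pE_2)$ directly, assembling everything from the torsion-pair statements already established. The second property, namely $\cA = \pE_0 \ast \pE_1 \ast \pE_2$, is essentially immediate: \Cref{res:torsion_filtation} says every $x \in \cA$ admits a filtration $0 = x_0 \subseteq x_1 \subseteq x_2 \subseteq x_3 = x$ with $x_{i+1}/x_i \in \pE_i$. Reading this filtration as two successive short exact sequences, it says precisely that $x \in \pE_0 \ast (\pE_1 \ast \pE_2)$, and since $\ast$ is associative this gives $\cA \subseteq \pE_0 \ast \pE_1 \ast \pE_2$; the reverse inclusion is automatic since all three classes lie in $\cA$.

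For the first property I would check the three vanishing conditions $\Hom_\cA(\pE_i, \pE_j) = 0$ for $i < j$ one at a time, using the torsion pairs from \Cref{res:torsion_pairs}. For $\Hom_\cA(\pE_0, \pE_2) = 0$ and $\Hom_\cA(\pE_1, \pE_2) = 0$: by \Cref{res:torsion_pairs} the pair $(\cA \cap (\cB \ast \Sigma^{-1}\cB), \pE_2)$ is a torsion pair, so $\Hom_\cA(\cA \cap (\cB \ast \Sigma^{-1}\cB), \pE_2) = 0$; it therefore suffices to observe $\pE_0 \subseteq \cA \cap (\cB \ast \Sigma^{-1}\cB)$ and $\pE_1 \subseteq \cA \cap (\cB \ast \Sigma^{-1}\cB)$. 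The first inclusion is the content of the lemma on the filtration of torsion classes just above; the second follows because $\pE_1 = \cA \cap \Sigma^{-1}\cB \subseteq \cA \cap (\cB \ast \Sigma^{-1}\cB)$ (any object of $\Sigma^{-1}\cB$ sits in the triangle $0 \to y \to y \to 0$ exhibiting it in $\cB \ast \Sigma^{-1}\cB$). Dually, for $\Hom_\cA(\pE_0, \pE_1) = 0$: by \Cref{res:torsion_pairs} the pair $(\pE_0, \cA \cap (\Sigma^{-1}\cB \ast \Sigma^{-2}\cB))$ is a torsion pair, so $\Hom_\cA(\pE_0, \cA \cap (\Sigma^{-1}\cB \ast \Sigma^{-2}\cB)) = 0$, and $\pE_1 = \cA \cap \Sigma^{-1}\cB \subseteq \cA \cap (\Sigma^{-1}\cB \ast \Sigma^{-2}\cB)$ by the same elementary argument.

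I do not expect a genuine obstacle here — every ingredient is already in place, and the proof is a matter of bookkeeping: recognising the filtration in \Cref{res:torsion_filtation} as an expression in $\ast$, and observing the three containments $\pE_0, \pE_1 \subseteq \cA \cap (\cB \ast \Sigma^{-1}\cB)$ and $\pE_1 \subseteq \cA \cap (\Sigma^{-1}\cB \ast \Sigma^{-2}\cB)$ that reduce the Hom-vanishing to the torsion-pair orthogonality. The only point requiring a sentence of justification is the trivial-triangle argument placing $\Sigma^{-1}\cB$ inside both $\cB \ast \Sigma^{-1}\cB$ and $\Sigma^{-1}\cB \ast \Sigma^{-2}\cB$; everything else is a direct citation of the preceding results.

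\begin{proof}
    By \Cref{res:torsion_filtation}, every $x\in\cA$ admits a filtration $0 = x_0\subseteq x_1\subseteq x_2\subseteq x_3 = x$ with $x_{i+1}/x_i\in\pE_i$. Viewing the inclusions $x_1\subseteq x_2$ and $x_2\subseteq x_3$ as short exact sequences shows $x\in\pE_0\ast(\pE_1\ast\pE_2)$, so by associativity of $\ast$ we get $\cA\subseteq\pE_0\ast\pE_1\ast\pE_2$; the reverse inclusion is trivial. Thus $\cA = \pE_0\ast\pE_1\ast\pE_2$.

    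It remains to show $\Hom_\cA(\pE_i,\pE_j) = 0$ for $i<j$. First note that $\pE_1 = \cA\cap\Sigma^{-1}\cB$ is contained in both $\cA\cap(\cB\ast\Sigma^{-1}\cB)$ and $\cA\cap(\Sigma^{-1}\cB\ast\Sigma^{-2}\cB)$: indeed, for any $y\in\Sigma^{-1}\cB$ the triangle $0\to y\xto{\id}y\to 0$ exhibits $y\in\cB\ast\Sigma^{-1}\cB$, and the triangle $y\xto{\id}y\to 0\to\Sigma y$ exhibits $y\in\Sigma^{-1}\cB\ast\Sigma^{-2}\cB$. Moreover, by the preceding lemma, $\pE_0\subseteq\cA\cap(\cB\ast\Sigma^{-1}\cB)$.

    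By \Cref{res:torsion_pairs}, $(\cA\cap(\cB\ast\Sigma^{-1}\cB),\pE_2)$ is a torsion pair, so $\Hom_\cA(\cA\cap(\cB\ast\Sigma^{-1}\cB),\pE_2) = 0$; since $\pE_0$ and $\pE_1$ are both contained in $\cA\cap(\cB\ast\Sigma^{-1}\cB)$, this gives $\Hom_\cA(\pE_0,\pE_2) = 0$ and $\Hom_\cA(\pE_1,\pE_2) = 0$. Again by \Cref{res:torsion_pairs}, $(\pE_0,\cA\cap(\Sigma^{-1}\cB\ast\Sigma^{-2}\cB))$ is a torsion pair, so $\Hom_\cA(\pE_0,\cA\cap(\Sigma^{-1}\cB\ast\Sigma^{-2}\cB)) = 0$; since $\pE_1\subseteq\cA\cap(\Sigma^{-1}\cB\ast\Sigma^{-2}\cB)$, this gives $\Hom_\cA(\pE_0,\pE_1) = 0$. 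Hence $(\pE_0,\pE_1,\pE_2)$ is a torsion triple.
\end{proof}
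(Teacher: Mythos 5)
Your proof is correct, and the second half (reading the filtration from \Cref{res:torsion_filtation} as $x\in\pE_0\ast\pE_1\ast\pE_2$) is exactly the paper's argument. Where you diverge is the Hom-vanishing: the paper deduces $\Hom_\cA(\pE_i,\pE_j)=0$ for $i<j$ directly from the fact that $\cB$ satisfies $E_2$, i.e.\ from $\Hom_\cT(\Sigma^{-i}\cB,\Sigma^{-j}\cB)=0$ applied to the generating classes $\cA\cap\Sigma^{-i}\cB$, with \Cref{lem:perp:inclusions} implicitly carrying the vanishing from these generators up to the closures $\pE_0=\filt{\Gen_\cA(\cA\cap\cB)}$ and $\pE_2=\filt{\Sub_\cA(\cA\cap\Sigma^{-2}\cB)}$. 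You instead extract all three vanishings from the orthogonality already packaged in the two torsion pairs of \Cref{res:torsion_pairs}, plus the containments $\pE_0,\pE_1\subseteq\cA\cap(\cB\ast\Sigma^{-1}\cB)$ (the first being the unnumbered lemma on the filtration of torsion classes, the second via the trivial triangle) and $\pE_1\subseteq\cA\cap(\Sigma^{-1}\cB\ast\Sigma^{-2}\cB)$. Both routes are sound. The paper's argument is slightly more economical in hypotheses: the Hom-vanishing there needs only $E_2$ of $\cB$ and the perp lemmas, not the noetherian assumption, whereas your route passes through \Cref{res:torsion_pairs} and hence through noetherianness — harmless in this corollary, since \Cref{res:torsion_filtation} already requires it, but worth noticing if one wanted to isolate which conclusions survive without that hypothesis. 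Conversely, your version is pure bookkeeping on statements already proved and makes no further appeal to the triangulated structure.
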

\begin{proof}
    That $\Hom(\pE_i, \pE_j) = 0$ for $i<j$ follows directly from the fact that $\cB$ satisfies $E_2$.
    For the second condition, let $x\in \pA$, then by \Cref{res:torsion_filtation} there is a filtration
    $0 = x_0 \subseteq x_1 \subseteq x_2 \subseteq x_3 = x$ such that each quotient $x_{i+1}/x_i\in \pE_i$.
    Thus there exist short exact sequences
    \begin{equation*}
    \begin{tikzcd}
        x_2 \ar[r,tail] &
        x \ar[r,tail] &
        e_2
    \end{tikzcd}
    \qquad\text{and}\qquad
    \begin{tikzcd}
        e_0 \ar[r,tail] &
        x_2 \ar[r,tail] &
        e_1,
    \end{tikzcd}
    \end{equation*}

    where $e_i\in \pE_i$. 
    This means that $x_2\in \pE_0\ast \pE_1$ and thus $x \in \pE_0\ast \pE_1\ast\pE_2$.
\end{proof}

\begin{proposition}
    There is a bijection
    \begin{align*}
        \set{\text{torsion triples in $\cA$}}
        &\xlongrightarrow{\Phi}
        \left\{\begin{array}{l}\text{Pairs of torsion pairs } [(\pT,\pF), (\tilde\pT,\tilde\pF)]\\ \text{ in } \cA \text{ satisfying } \pT\subseteq\tilde\pT\end{array}\right\}\\
        (\pS_0, \pS_1, \pS_2)&\longmapsto (\pS_0, \pS_1 \ast \pS_2), (\pS_0 \ast \pS_1, \pS_2)\\
        (\pT, \pF\cap \tilde\pT,\tilde \pF)&\longmapsfrom [(\pT,\pF), (\tilde\pT,\tilde\pF)].
    \end{align*}
\end{proposition}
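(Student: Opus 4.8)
The plan is to check that $\Phi$ and the candidate inverse $\Psi\colon[(\pT,\pF),(\tilde\pT,\tilde\pF)]\mapsto(\pT,\pF\cap\tilde\pT,\tilde\pF)$ are both well defined, and then that $\Psi\circ\Phi=\id$ and $\Phi\circ\Psi=\id$. Throughout I will freely use: associativity of $\ast$; that a torsion class is closed under quotients and extensions and a torsion-free class under subobjects and extensions; that the two halves of a torsion pair are the $\Hom$-orthogonals of one another (so $\pT\subseteq\tilde\pT$ forces $\tilde\pF=\tilde\pT^{\perp}\subseteq\pT^{\perp}=\pF$); and that all the subcategories in sight contain $0$ (for the parts of a torsion triple this follows by decomposing $0\in\cA=\pS_0\ast\pS_1\ast\pS_2$).

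For well-definedness of $\Phi$: given a torsion triple $(\pS_0,\pS_1,\pS_2)$, associativity gives $\cA=\pS_0\ast(\pS_1\ast\pS_2)=(\pS_0\ast\pS_1)\ast\pS_2$, so only the $\Hom$-vanishings remain. If $t\in\pS_0$ and $s_1\mono y\epi s_2$ is short exact with $s_i\in\pS_i$, then a map $t\to y$ composed with $y\epi s_2$ vanishes (as $\Hom(\pS_0,\pS_2)=0$), hence factors through $s_1$, and is then $0$ since $\Hom(\pS_0,\pS_1)=0$; thus $\Hom(\pS_0,\pS_1\ast\pS_2)=0$, and the identical argument gives $\Hom(\pS_0\ast\pS_1,\pS_2)=0$. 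Since $0\in\pS_1$ we also have $\pS_0\subseteq\pS_0\ast\pS_1$, so $\Phi(\pS_0,\pS_1,\pS_2)$ is a pair of torsion pairs of the required shape. For $\Psi$: given $(\pT,\pF),(\tilde\pT,\tilde\pF)$ with $\pT\subseteq\tilde\pT$, the three needed vanishings are immediate from $\pF\cap\tilde\pT\subseteq\pF$, $\pT\subseteq\tilde\pT$ and $\pF\cap\tilde\pT\subseteq\tilde\pT$. For the factorization, take $x\in\cA$, split it as $\tilde t\mono x\epi\tilde f$ with $\tilde t\in\tilde\pT$, $\tilde f\in\tilde\pF$, then split $\tilde t$ as $t\mono\tilde t\epi f$ with $t\in\pT$, $f\in\pF$; since $\tilde\pT$ is closed under quotients, $f\in\tilde\pT$, so $f\in\pF\cap\tilde\pT$ and $x\in\pT\ast(\pF\cap\tilde\pT)\ast\tilde\pF$.

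For $\Psi\circ\Phi=\id$ one must prove $(\pS_1\ast\pS_2)\cap(\pS_0\ast\pS_1)=\pS_1$. Here $\supseteq$ is immediate from $0\in\pS_0$ and $0\in\pS_2$. For $\subseteq$, let $y$ lie in the intersection with short exact sequences $s_1\mono y\epi s_2$ and $s_0'\mono y\epi s_1'$, all $s_i,s_i'\in\pS_i$. Composing $s_0'\mono y$ with $y\epi s_2$ is $0$ (as $\Hom(\pS_0,\pS_2)=0$), so $s_0'\mono y$ factors through the kernel $s_1\mono y$, giving a monomorphism $s_0'\mono s_1$; but $\Hom(\pS_0,\pS_1)=0$, so this monomorphism is the zero map, forcing $s_0'=0$ and hence $y\cong s_1'\in\pS_1$. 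For $\Phi\circ\Psi=\id$ one must prove $(\pF\cap\tilde\pT)\ast\tilde\pF=\pF$ and $\pT\ast(\pF\cap\tilde\pT)=\tilde\pT$. The inclusions $\subseteq$ follow from $\tilde\pF\subseteq\pF$ together with $\pF$ being extension closed, and from $\pT,\pF\cap\tilde\pT\subseteq\tilde\pT$ with $\tilde\pT$ extension closed. The reverse inclusions are the torsion-pair decompositions: for $x\in\pF$, the $(\tilde\pT,\tilde\pF)$-sequence $\tilde t\mono x\epi\tilde f$ has $\tilde t\in\pF$ since $\pF$ is closed under subobjects, so $x\in(\pF\cap\tilde\pT)\ast\tilde\pF$; and for $x\in\tilde\pT$, the $(\pT,\pF)$-sequence $t\mono x\epi f$ has $f\in\tilde\pT$ since $\tilde\pT$ is closed under quotients, so $x\in\pT\ast(\pF\cap\tilde\pT)$.

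I expect the one genuinely delicate point to be the identity $(\pS_1\ast\pS_2)\cap(\pS_0\ast\pS_1)=\pS_1$ in $\Psi\circ\Phi=\id$: unlike the product identities in $\Phi\circ\Psi$, it cannot be obtained by associativity and closure properties alone, and instead needs the orthogonality $\Hom(\pS_0,\pS_1)=\Hom(\pS_0,\pS_2)=0$ of the original triple, exploited through the remark that a morphism which is simultaneously a monomorphism and the zero morphism must have zero source. Everything else is a routine bookkeeping exercise with $\ast$, the closure properties of torsion and torsion-free classes, and the orthogonality description of the halves of a torsion pair.
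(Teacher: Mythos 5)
Your proposal is correct and follows essentially the same route as the paper: the same two product identities $(\pF\cap\tilde\pT)\ast\tilde\pF=\pF$, $\pT\ast(\pF\cap\tilde\pT)=\tilde\pT$ via the torsion-pair decompositions and closure properties, and the same orthogonality argument showing $(\pS_1\ast\pS_2)\cap(\pS_0\ast\pS_1)=\pS_1$ by forcing the monomorphism from the $\pS_0$-part to vanish. The only difference is that you spell out the well-definedness checks which the paper dismisses as straightforward.
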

\begin{proof}
    Denote the potential inverse for $\Phi$ by $\Phi'$.
    It is straightforward to check that the maps take values in the relevant sets.

    \medbreak
    \indent Let us check that $\Phi\Phi' = \id$.
    Let $[(\pT,\pF), (\pT',\pF')]$ be a pair of torsion pairs such that $\pT\subseteq \pT'$.
    Then
    \begin{equation*}
        \Phi\Phi'((\pT,\pF), (\pT',\pF')) 
        = \Phi(\pT, \pF\cap \pT',\pF') 
        = [ (\pT, (\pF\cap \pT')\ast\pF'), (\pT\ast (\pF\cap \pT'),\pF')].
    \end{equation*}

    Thus we need to check that $(\pF\cap \pT')\ast\pF' = \pF$ and  $\pT\ast (\pF\cap \pT') = \pT'$.
    We check the first one of these, and the other one can be shown by a similar argument.
    Since $\pF' \subseteq \pF$ we get that $(\pF\cap \pT')\ast\pF' \subseteq \pF$.
    To see the other inclusion, let $x\in \pF$, then since $(\pT', \pF')$ is a torsion pair, there is a short exact sequence

    \begin{equation*}
        t'\mono x\epi f',
    \end{equation*}

    with $f'\in\pF'$ and $t'\in\pT'$.
    However, since torsion-free classes are closed under subobjects, we get that $t'\in\pF$.
    Thus $t'\in\pF\cap\pT'$, and therefore $x'\in (\pF\cap \pT')\ast\pF'$.

    \medbreak
    \indent Let us check that $\Phi'\Phi = \id$. 
    Let $(\pS_0, \pS_1, \pS_2)$ be a torsion triple.
    Then
    \begin{equation*}
        \Phi'\Phi(\pS_0, \pS_1, \pS_2) 
        = \Phi'((\pS_0, \pS_1 \ast \pS_2), (\pS_0 \ast \pS_1, \pS_2))
        = (\pS_0, (\pS_1 \ast \pS_2) \cap (\pS_0 \ast \pS_1), \pS_2).
    \end{equation*}

    We therefore need to check if $(\pS_1 \ast \pS_2) \cap (\pS_0 \ast \pS_1) = \pS_1$.
    It is straightforward to see that the inclusion $\supseteq$ is satisfied.
    For the other inclusion, let $x\in(\pS_1 \ast \pS_2) \cap (\pS_0 \ast \pS_1)$.
    Thus there are two short exact sequences
    $s_1\mono x\epi s_2$ and $s_0\mono x\epi s_1'$, with $s_i \in \pS_i$ and $s_1'\in \pS_1$.
    Consider the following diagram of solid arrows.
    \begin{equation*}
    \begin{tikzcd}
        s_0 \ar[r, "f"] \ar[d, dashed, "\eta"] & 
        x \ar[r,"g"]\ar[d, equal] & 
        s_1'\\
        s_1 \ar[r,"\alpha"] & 
        x \ar[r, "\beta"] & 
        s_2.
    \end{tikzcd}
    \end{equation*}
    
    Notice that $\beta f = 0$ due to the definition of a torsion triple.
    Thus there exists a morphism $\eta:s_0\to s_1$ such that $f = \alpha \eta$.
    However, $\eta = 0$ due to the same definition, implying that $f = 0$.
    Thus, $g$ is an isomorphism, making $x\cong s_1'\in S_1$.
\end{proof}

\section{Examples}
\label{sec:examples}

\subsection{Jensen-Madsen-Su}
Let $k$ be a field.
Let $\cA$ be a noetherian abelian category of the type studied in \cite{jensen2013filtrations}, that is, $\cA$ is either the module category of a finite-dimensional $k$-algebra,
or it is a noetherian abelian $k$-category with finite homological dimension and Hom-finite derived category $\D^b(\cA)$, such that there is a locally noetherian abelian Grothendieck $k$-category $\cA'$ with finite homological dimension such that $\cA\subseteq \cA'$ is the subcategory of noetherian objects (see \cite[sec.~0]{jensen2013filtrations}).

\indent Now consider a tilting object $T\in\cA$ of homological dimension 2.
By this, we mean an object $T\in\cA$ that induces a derived equivalence 
\begin{equation*}
    F=R\Hom(T,-):\D^b(\cA)\rightarrow \D^b(\widehat \cB),
\end{equation*}

\noindent where $\widehat\cB = \mod(\End(T)^{op})$, such that $H^iFX = \Ext^i(T,X) = 0$ for all $X\in\cA$ and $i\geq 3$.
Denote the quasi-inverse functor by $G: \D^b(\widehat\cB)\rightarrow \D^b(\cA)$.
Notice that $\widehat\cB$ sits inside $\D^b(\widehat\cB)$ as a proper abelian subcategory,
and since $F$ is a derived equivalence we can pull $\widehat\cB$ back to be considered as a proper abelian subcategory of $\D^b(\cA)$, see \Cref{res:pullback-abelian-category}.
Let $\cB = F^{-1}(\widehat\cB)$.

\indent To show that we are in a setup similar to that of \Cref{setup:twotilt}, we need the following lemma.

\begin{lemma}
    Using the notation from above, we have that $\cA\subseteq \cB\ast\Sigma^{-1}\cB\ast\Sigma^{-2}\cB$ and $\cB\subseteq\Sigma^2\cA\ast\Sigma\cA\ast\cA$.
\end{lemma}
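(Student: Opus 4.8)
The plan is to obtain both inclusions by reading off cohomological amplitudes across the equivalence $F$ and then converting an amplitude bound into a $\ast$-decomposition. First I would record the elementary fact that, for an abelian category $\cC$ with standard heart $\cC\subseteq\D^b(\cC)$, any object $Y$ whose cohomology vanishes outside degrees $a,a+1,\dots,b$ lies in $\Sigma^{-a}\cC\ast\Sigma^{-a-1}\cC\ast\dots\ast\Sigma^{-b}\cC$: this is immediate by iterating the truncation triangles $\tau^{\leq j}Y\to Y\to\tau^{\geq j+1}Y\to\Sigma\tau^{\leq j}Y$ and using associativity of $\ast$, since a complex concentrated in a single degree $j$ is $\Sigma^{-j}$ of an object of $\cC$. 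Applied to $\widehat\cB\subseteq\D^b(\widehat\cB)$ this reads ``cohomology in degrees $[0,2]$'' as membership in $\widehat\cB\ast\Sigma^{-1}\widehat\cB\ast\Sigma^{-2}\widehat\cB$; applied to $\cA\subseteq\D^b(\cA)$ it reads ``cohomology in degrees $[-2,0]$'' as membership in $\Sigma^2\cA\ast\Sigma\cA\ast\cA$.

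For $\cA\subseteq\cB\ast\Sigma^{-1}\cB\ast\Sigma^{-2}\cB$: given $X\in\cA$, the complex $FX$ has $H^i(FX)=\Ext^i(T,X)$, which vanishes for $i<0$ trivially and for $i\geq 3$ by the hypothesis on $T$, so $FX$ has cohomology concentrated in degrees $0,1,2$ and hence $FX\in\widehat\cB\ast\Sigma^{-1}\widehat\cB\ast\Sigma^{-2}\widehat\cB$ by the first paragraph. Now apply the quasi-inverse equivalence $G$: it commutes with $\Sigma$ and carries triangles to triangles, hence preserves $\ast$; since $X\cong GFX$ and $G(\widehat\cB)$ is, up to isomorphism, the subcategory $\cB=F^{-1}(\widehat\cB)$, we conclude $X\in\cB\ast\Sigma^{-1}\cB\ast\Sigma^{-2}\cB$.

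For $\cB\subseteq\Sigma^2\cA\ast\Sigma\cA\ast\cA$: since $\cB=G(\widehat\cB)$, by the first paragraph it suffices to show that each $G\hat b$ (for $\hat b\in\widehat\cB$) has $\cA$-cohomology concentrated in degrees $-2,-1,0$, and I would prove the two half-bounds separately using the standard t-structure characterizations $M\in\D^{\leq 0}_\cA\iff\Hom(M,\Sigma^{-k}a)=0$ for all $a\in\cA$, $k\geq 1$, and $M\in\D^{\geq -2}_\cA\iff\Hom(\Sigma^{k}a,M)=0$ for all $a\in\cA$, $k\geq 3$ (these hold because $\D^{\leq 0}_\cA$, resp.\ $\D^{\geq -2}_\cA$, is the left, resp.\ right, orthogonal of the subcategory generated under extensions by the $\Sigma^{-k}\cA$ with $k\geq 1$, resp.\ the $\Sigma^{k}\cA$ with $k\geq 3$). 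Transporting along the equivalence $F$, these Hom-groups become $\Hom_{\D^b(\widehat\cB)}(\hat b,\Sigma^{-k}Fa)$ and $\Hom_{\D^b(\widehat\cB)}(Fa,\Sigma^{-k}\hat b)$. Since $Fa$ has cohomology in degrees $[0,2]$, we have $\Sigma^{-k}Fa\in\D^{\geq k}_{\widehat\cB}\subseteq\D^{\geq 1}_{\widehat\cB}$ for $k\geq 1$ while $\hat b\in\D^{\leq 0}_{\widehat\cB}$, and $Fa\in\D^{\leq 2}_{\widehat\cB}$ while $\Sigma^{-k}\hat b\in\D^{\geq k}_{\widehat\cB}\subseteq\D^{\geq 3}_{\widehat\cB}$ for $k\geq 3$; in both cases the relevant Hom vanishes by $\Hom(\D^{\leq m}_{\widehat\cB},\D^{\geq n}_{\widehat\cB})=0$ for $m<n$. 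Hence $G\hat b\in\D^{\leq 0}_\cA\cap\D^{\geq -2}_\cA$, which is what we needed.

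The argument has no deep step; the main thing to watch is the shift/amplitude bookkeeping --- keeping straight that a heart object placed in cohomological degree $j$ is $\Sigma^{-j}$ of the heart, and picking the correct orthogonality direction for each half-bound --- together with the routine (but worth stating) points that $G(\widehat\cB)=\cB$ because $G$ is quasi-inverse to $F$, and that $\cA\subseteq\D^b(\cA)$ and $\widehat\cB\subseteq\D^b(\widehat\cB)$ are genuine hearts of t-structures, so that the truncation language is licensed.
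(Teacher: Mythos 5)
Your proof is correct and takes essentially the same route as the paper: both inclusions are obtained by showing that $F$ (resp.\ $G$) sends the relevant heart to complexes of cohomological amplitude three, and then converting the amplitude bound into a $\ast$-decomposition via truncation triangles. The only difference is presentational: for $\cB\subseteq\Sigma^2\cA\ast\Sigma\cA\ast\cA$ the paper transports the standard aisles and their perpendicular classes globally, while you verify the same truncation bounds objectwise through Hom-vanishing; the orthogonality input is identical.
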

\begin{proof}
    We start by proving the first inclusion.
    Let $x\in\cA$.
    Using the assumption that $T$ has projective dimension $2$, it follows that $H^i R\Hom(T,x) = \Ext^i(T,x) = 0$ for $i\neq 0,1,2$.
    Thus by the use of soft truncations one can see that $R\Hom(T,x)$ is equivalent to a three term complex concentrated in cohomological degrees $0,1,2$,
    i.e. $x\in\cB\ast\Sigma^{-1}\cB\ast\Sigma^{-2}\cB$.

    \medbreak
    \noindent To show the second inclusion, recall that $\cA$ is the heart of the standard t-structure $(\pD_{\geq 0}, \pD_{< 0})$ in $\D^b(\cA)$.
    For $i\in\ZZ$ denote $\pD_{\geq i} = \Sigma^i \pD_{\geq 0}$, and $\pD_{< i} = \Sigma^i \pD_{< 0}$.
    Similarly, $\widehat \cB$ is the heart of the standard t-structure $(\pP_{\geq 0}, \pP_{< 0})$ in $\D^b(\widehat \cB)$.
    For $i\in\ZZ$ denote $\pP_{\geq i} = \Sigma^i \pP_{\geq 0}$, and $\pP_{< i} = \Sigma^i \pP_{< 0}$.
    Since $T$ has projective dimension $2$ is follows that $F(\pD_{\geq 0}) \subseteq \pP_{\geq -2}$.
    Thus 
    \begin{equation}
        \label{eq:impl1}
        \begin{aligned}
        F(\pD_{\geq 0})^\perp \supseteq \pP_{\geq -2}^\perp
        \quad&\Longrightarrow\quad F(\pD_{\geq 0}^\perp) \supseteq \pP_{\geq -2}^\perp
             &&\Longrightarrow\quad F(\pD_{< 0}) \supseteq \pP_{< -2}
            \\
             &\Longrightarrow\quad \pD_{<0} \supseteq G(\pP_{<-2})
             &&\Longrightarrow\quad \pD_{<3} \supseteq G(\pP_{<1}),
        \end{aligned}
    \end{equation}

    \noindent where the second implication follows from the fact that t-structures are torsion pairs,
    and the last implication follows by applying $\Sigma^3$.
    A similar calculation can be done for the torsion-free parts.
    \begin{equation}
        \label{eq:impl2}
        \begin{aligned}
            F(\pD_{< 0}) \subseteq \pP_{< 0}
            \quad&\Longrightarrow\quad  {}^\perp F(\pD_{< 0}) \supseteq {}^\perp\pP_{<0}
            &&\Longrightarrow\quad F({}^\perp \pD_{< 0}) \supseteq {}^\perp\pP_{<0}\\
            &\Longrightarrow\quad  F(\pD_{\geq 0}) \supseteq \pP_{\geq 0}.
            &&\Longrightarrow\quad \pD_{\geq 0} \supseteq G(\pP_{\geq 0}).
        \end{aligned}
    \end{equation}

    Combining (\ref{eq:impl1}) and (\ref{eq:impl2}) now gives that
    \begin{equation*}
        G(\widehat \cB) 
        \subseteq G(\pP_{\geq 0})\cap G(\pP_{<1})
        \subseteq \pD_{\geq 0} \cap \pD_{<3}.
    \end{equation*}

    This means that objects in $\cB = G(\widehat \cB)$ have homology concentrated in homological degrees $0,1,2$.
    Thus, $\cB \subseteq \Sigma^2\cA \ast \Sigma \cA \ast \cA$.
\end{proof}

\medbreak

In \cite{jensen2013filtrations} Jensen, Madsen and Su define collections of objects 
\[
     \pF^i = \set{x\in\cA\mid H^jF(x) = 0 \text{ for } j\neq i},
\]

for $i\geq 0$. Note that $\pF^i = 0$ for $i\geq 3$.
It is straightforward to check that $\pF^i=\cA\cap\Sigma^{-i}\cB$.
From this they build three other collections of objects $\cE^i$, which by \cite[lem. 17 \& 22]{jensen2013filtrations} can be described as $\pE^0 = \filt{\Gen(\pF^0)}$, $\pE^1 = \pF^1$ and $\pE^2 = \filt{\Sub(\pF^2)}$.
This places us in the situation of \Cref{setup:twotilt} so \cite[thm.~2]{jensen2013filtrations} follows from
our \Cref{res:torsion_filtation}.

\subsection{Negative Cluster Categories}

Negative cluster categories are examples of triangulated categories in which there are proper abelian subcategories, none of which are hearts of t-structures.
Furthermore, these categories also have the advantage that there is a full combinatorial model describing some of them, see \cite[sec. 10]{coelhosimoes2016torsion}.
Here we will give a short introduction.

\medbreak 
Let $n,w\in\NN=\{1,2,3,\cdots\}$, and define the \emph{negative cluster category} as the orbit category
\begin{equation*}
        \pC_{-w}(A_n) \coloneqq \D^b(kA_n)/\Sigma^{w+1}\tau,
\end{equation*}

where $\tau$ refers to the Auslander-Reiten translation. 
This triangulated category is $-w$-Calabi--Yau (see \cite[sec.~4, sec.~8.4]{keller2005triangulated}), which means that $\Sigma^{-w}$ is a Serre functor.
Let $N=(w+1)(n+1)-2$ and consider the $N$-gon, say $\pP_N$.
Labelling the corners by $0,...,N-1$ anticlockwise we can denote each diagonal in $\pP_N$ by a pair of numbers $(a,b)$ with $a<b$. 
We say that the diagonal $(a,b)$ is \emph{admissible} if $w+1\mid b-a+1$.
There is a one to one correspondence between indecomposable objects in $\pC_{-w}(A_n)$ and admissible diagonals in the $N$-gon.

\medbreak
One way to find proper abelian subcategories in this setting is to create them from simple minded systems (see \cite[def.~1.2]{jorgensen2022abelian} for a definition). 
Given a simple minded system $\cS$, a proper abelian subcategory is generated by taking the extension closure $\filt{\cS}$, see \cite[thm.~A]{jorgensen2022abelian}.
In the setting of negative cluster categories, all simple minded systems can be classified combinatorially, see \cite[prop.~2.13]{coelhosimoes2020simple} and \cite[thm.~6.5]{coelhosimoes2015hom}.
In $\pC_{-w}(A_n)$ a collection of $n$ admissible diagonals corresponds to a $w$-simple minded system if no two diagonals in the collection cross and there are no two diagonals sharing an endpoint.
See \Cref{fig:example:sms} for two examples of such collections.

\indent Furthermore, it is possible to describe triangles and morphisms between objects using the combinatorial model.
All of this can be found in \cite[sec. 10]{coelhosimoes2016torsion}.

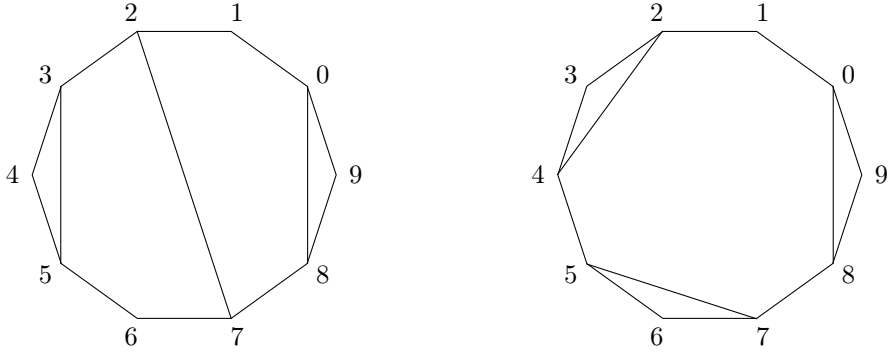
\begin{figure}[ht]
    \begin{tikzpicture}
        \node[regular polygon,regular polygon sides=10,minimum size=4cm,draw] (a){};
        \node[regular polygon,regular polygon sides=10,minimum size=4.5cm,draw, opacity=0] (b){};
        \foreach \x in {1,...,9}{\node[] at (b.corner \x) {\x};}
        \node[] at (b.corner 10) {0};
        \draw[] (a.corner 3) -- (a.corner 5);
        \draw[] (a.corner 7) -- (a.corner 2);
        \draw[] (a.corner 8) -- (a.corner 10);
    \end{tikzpicture}
    \hspace{5em}
    \begin{tikzpicture}
        \node[regular polygon,regular polygon sides=10,minimum size=4cm,draw] (a){};
        \node[regular polygon,regular polygon sides=10,minimum size=4.5cm,draw, opacity=0] (b){};
        \foreach \x in {1,...,9}{\node[] at (b.corner \x) {\x};}
        \node[] at (b.corner 10) {0};
        \draw[] (a.corner 4) -- (a.corner 2);
        \draw[] (a.corner 7) -- (a.corner 5);
        \draw[] (a.corner 8) -- (a.corner 10);
    \end{tikzpicture}
    \caption{two simple-minded systems in $\pC_{-2}(A_3)$.}
    \label{fig:example:sms}
\end{figure}

\medbreak
\textbf{The example.}
Let $w = 6$, $n = 5$, and consider the negative cluster category $\pC_{-w}(A_n)$ which we can work with combinatorially as described above, using an $N$-gon, where $N = (w+1)(n+1)-2 = 40$.
In \Cref{fig:ncc65_tt} we can see a segment of the AR quiver of $\pC_{-w}(A_n)$.
Now consider the following two simple minded systems
\begin{align*}
    \cS_\cA &= \set{( 28, 34 ), ( 14, 20 ), ( 21, 27 ), ( 1, 7 ), ( 0, 13 )} \\
    \cS_\cB &= \set{( 23, 29 ), ( 7, 13 ), ( 22, 35 ), ( 1, 14 ), ( 15, 21 )}.
\end{align*}

Using these we construct proper abelian subcategories $\cA = \filt{\cS_\cA}$ and $\cB=\filt{\cS_\cB}$, see \Cref{fig:ncc65_tt}.
It is straightforward to check that both $\cA$ and $\cB$ satisfy $E_5$.
Similarly, it is straightforward to check that $\cA\subseteq \cB\ast\Sigma^{-1}\cB\ast\Sigma^{-2}\cB$ and $\cB\subseteq \Sigma^{2}\cA\ast\Sigma^{1}\cA\ast\cA$.
Lastly for \Cref{setup:twotilt} to be satisfied, $\cA$ needs to be noetherian.
However, notice that $\cA$ is isomorphic to the module category of a path algebra, that is $\cA\cong\mod kQ$ where $Q$ is the quiver
\begin{equation*}
Q:\quad \begin{tikzcd}
    1\rar&2\rar&3\rar&4&5\lar.
\end{tikzcd}
\end{equation*}

Thus we get that $\cA$ is a noetherian abelian category, and thereby \Cref{setup:twotilt} is satisfied.
With this, the intersections needed can be described as
\begin{align*}
    \pE_0 &= \filt{\Gen_\cA (\cA \cap  \cB)}_\cA &&= \set{(1,7),(7,13)},\\
    \pE_1 &= \cA \cap \Sigma^{-1} \cB &&= \set{ ( 0, 34 ), ( 0, 20 ), ( 14, 20 ), ( 14, 34 ), ( 21, 34 ), ( 0, 13 ), ( 28, 34 ) },\\
    \pE_2 &= \filt{\Sub_\cA (\cA \cap \Sigma^{-2} \cB)}_\cA &&= \set{(21,27)},
\end{align*}
see \Cref{fig:ncc65_tt}.
This means that we are in a setup where \Cref{res:torsion_filtation} can be used. 
To see a specific example of a filtration of an element consider $x=(7,27)\in\cA$.
This element has the filtration 
\begin{equation*}
0\subseteq (7,13)\subseteq (7,20)\subseteq (7,27) = x.
\end{equation*}

Using the combinatorial model for $\pC_{-w}(A_n)$ we can see that there are short triangles,

\begin{equation*}
\begin{tikzcd}
    (7,13)\rar[]&(7,20)\rar[]&(14,20)
\end{tikzcd}
\quad\text{and}\quad
\begin{tikzcd}
    (7,20)\rar[]&(7,27)\rar[]&(21,27).
\end{tikzcd}
\end{equation*}

Since all the elements of the two short triangles are in $\cA$, this implies that we have corresponding short exact sequences
\begin{equation*}
\begin{tikzcd}
    (7,13)\rar[rightarrowtail]&(7,20)\rar[twoheadrightarrow]&(14,20)
\end{tikzcd}
\quad\text{and}\quad
\begin{tikzcd}
    (7,20)\rar[rightarrowtail]&(7,27)\rar[twoheadrightarrow]&(21,27),
\end{tikzcd}
\end{equation*}

where $(7,13)\in\pE_0$, $(14,20)\in\pE_1$, and $(21,27)\in\pE_2$.
Thus the given filtration of $x$ is indeed of the form stated by \Cref{res:torsion_filtation}.

\begin{figure}
\begin{center}
    \includegraphics[scale=.45]{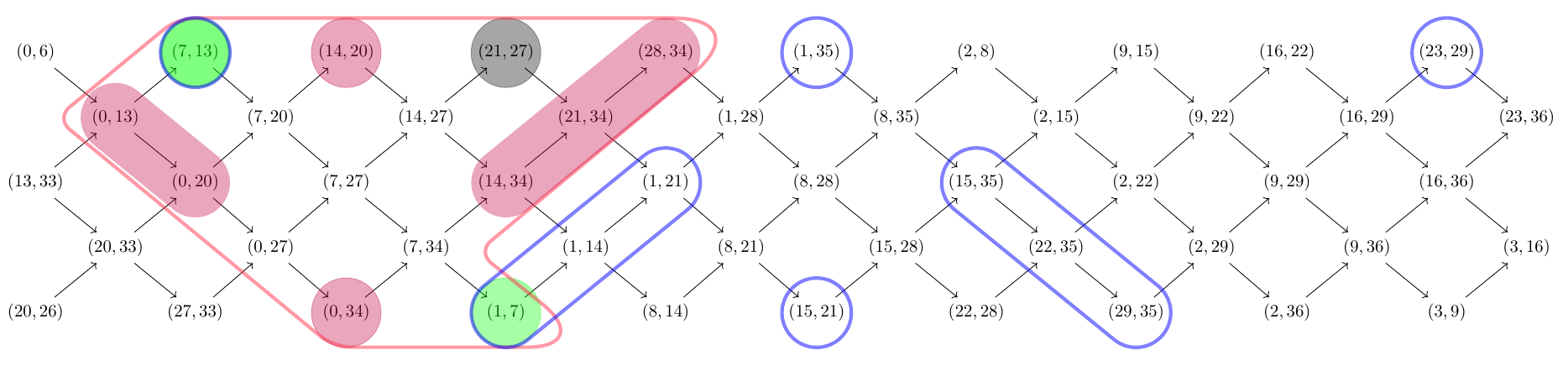}
    \caption{This is a segment of the AR quiver of $\cC_{-6}(A_5)$. The objects in $\cA$ are the ones surrounded by a red line, and the objects of $\cB$ are the ones surrounded by a blue line.
    $\pE_0$ is indicated by a green fill, $\pE_1$ is indicated by a red fill, and $\pE_2$ is indicated by a gray fill.}
    \label{fig:ncc65_tt}
\end{center}
\end{figure}

\leavevmode

\medbreak

\textit{Acknowledgement.}
I would like to thank my supervisor Peter Jørgensen for all of his helpful guidance.

\indent This project was supported by grant no. DNRF156 from the Danish National Research Foundation.

\end{document}